\newcommand{\F}{\mathcal F}
\renewcommand{\P}{\mathcal P}
\newcommand{\G}{\mathcal G}
\newcommand{\U}{\mathcal U}
\newcommand{\V}{\mathcal V}
\newcommand{\N}{\mathbb N}
\newcommand{\R}{\mathbb R}
\newcommand{\C}{\mathcal C}
\newcommand{\K}{\mathcal K}
\newcommand{\id}{\mathrm{id}}
\newcommand{\e}{\varepsilon}
\renewcommand{\phi}{\varphi}
\newcommand{\Lip}{\mathrm{Lip}}
\newcommand{\im}{\mathrm{im}}
\newcommand{\diam}{\mathrm{diam}}
\newtheorem{theorem}{Theorem}[section]
\newtheorem*{theorem*}{Main theorem} 
\newtheorem{lemma}[theorem]{Lemma}
\newtheorem{problem}[theorem]{Problem}
\newtheorem{corollary}[theorem]{Corollary}
\theoremstyle{definition}
\newtheorem{definition}[theorem]{Definition}
\newtheorem{remark}[theorem]{Remark}
\newtheorem{example}[theorem]{Example}
\title{Peano continua with self regenerating fractals}
\author{Magdalena Nowak}
\address{M.Nowak: Mathematics Department, Jan Kochanowski University in Kielce, ul. Uniwersytecka 7, 25-406 Kielce, Poland}
\email{magdalena.nowak805@gmail.com}
\subjclass[2020]{Primary: 28A80; Secondary: 37C25, 37C70, 47H09, 51F99, 54C05, 54D05, 54D30, 54F15}
\keywords{deterministic fractal, topological fractal, self regenerating fractal, topologically contracting system, iterated function system, IFS-attractor, Peano continuum}
\begin{document}
\begin{abstract}
We deal with the question of Masayoshi Hata: is every Peano continuum a topological fractal? A compact space $X$ is a topological fractal if there exists $\F$ a finite family of self-maps on $X$ such that  $X=\bigcup_{f\in\F}f(X)$ and for every open cover $\mathcal{U}$ of $X$ there is $n\in\mathbb{N}$ such that for all maps $f_1,\dots,f_n\in\F$ the set $f_1\circ\dots\circ f_n(X)$ is contained in some set $U\in\mathcal{U}$.

In the paper we present some idea how to extend a topological fractal and we show that a Peano continuum is a topological fractal if it contains so-called self regenerating fractal with nonempty interior. 
A Hausdorff topological space $A$ is a self regenerating fractal if for every non-empty open subset $U$, $A$ is a topological fractal for some family of maps constant on $A\setminus U$. 

The notion of self regenerating fractal much better reflects the intuitive perception of self-similarity. We present some classical fractals which are self regenerating.
\end{abstract}
\maketitle

\section{Introduction}\label{intro}

We deal with the topological generalization of IFS-attractors - a compact set invariant under the finite family of contractions. Let us recall that a function $f$ between two metric spaces is called \textit{contraction} if its Lipschitz constant $\Lip f<1$. By an \textit{iterated function system} (IFS) on a metric space $X$ we understand a finite family of contractions $X\to X$. A function $X\to X$ is called a~\textit{self-map} on~$X$. For a given family $\F$ of self-maps on $X$ we define the following families of maps:  $$\F^0=\{id_X\},\quad \F^n=\{f_1\circ\dots\circ f_n; f_1,\dots,f_n\in\F\}.$$ Moreover, let $\F(Y)=\bigcup_{f\in\F}f(Y)$ for $Y\subset X$.

Let $\F$ be an iterated function system on a metric space $X$. 
 A compact set $A\subset X$ is an \textit{IFS-attractor} (\textit{deterministic fractal, self-similar set}) for $\F$ if $A=\F(A)$. A simple example of such space is the unit interval $[0,1]$ -  the IFS-attractor for family $\{\frac{x}2,\frac{x+1}2 \}$. Other known examples are the ternary Cantor set, Koch curve, Sierpi\'nski triangle, Sierpi\'nski carpet, etc. It can be also shown that every arc of finite length is an IFS-attractor (see \cite{Sanders}).  

A \textit{topological fractal} is a topological version of IFS-attractor. It is a pair $(X,\F)$ where $X$ is a compact space, $\F$ is a finite family of continuous self-maps which has some topological contractive property on $X$ (see Definition \ref{def_top_contr}) and  $X=\F(X)$. Topological fractals have been studied in various contexts, eg. among countable spaces \cite{N} or zero-dimensional spaces \cite{BNS}. Now we are interested in topological fractals in the class of Peano continua. By Peano continuum we understand a metrizable, locally connected continuum or, equivalently (thanks to the Hahn–Mazurkiewicz theorem), a continuous image of the unit interval. Masayoshi Hata proved in \cite{H} that for every topological fractal $(X,\F)$ if $X$ is connected, then it is locally connected, so it is a Peano continuum. It is still an open question: is every Peano continuum a topological fractal?

Looking for the conditions when a Peano continuum $P$ becomes a topological fractal, we discover that the existence of a free arc (an open subset of $P$ homeomorphic to the interval) implies this result (see \cite{FN}). This leads us to the notion of self regenerating fractal. A Hausdorff topological space $X$ is called a \textit{self regenerating fractal} if for every nonempty, open subset $U$ there exists $\F$, a family of continuous functions constant outside $U$ such that $(X,\F)$ is a topological fractal. Having such a self regenerating fractal as a subset with nonempty interior, guarantees that a Peano continuum is a topological fractal together with some family of maps. This is our main result presented in the chapter \ref{main_chapter}:
\begin{theorem*}
	For every Peano continuum $X$ which has $A\subset X$ self regenerating fractal with nonempty interior, $X$ is an underlying space for some topological fractal.
\end{theorem*}
This statement and definition of self regenerating fractal are proposed by Taras Banakh and developed by the author.

\section{Topologically contracting systems}\label{top_contr_chapter}

The generalization of iterated function systems for topological spaces was proposed by Banakh and Nowak in \cite{BN1} as topologically contracting families of maps.

\begin{definition}\label{def_top_contr}
	A finite family $\F$ of continuous self-maps on the Hausdorff space $X$ is called a {\em topologically contracting system} on $Y\subset X$ if for any open cover $\U$ of $Y$ there exists a number $n$ such that for every $f_1,\dots,f_n\in\F$ the image $f_1\circ\dots\circ f_n(Y)$ is contained in some set $U\in\U$.
\end{definition}

\begin{remark}\label{top_contr_def_rem}
	If the set $Y\supset \F(X)$, then the definition of topologically contracting system $\F$ on $Y$ is equivalent to the condition that for any open cover $\U$ of $Y$ there exists a number $n$ such that for every $m\geq n$ and $f_1,\dots,f_m\in\F$ the image $f_1\circ\dots\circ f_m(Y)$ lies in some set from the cover $\U$.
\end{remark}


It can be easily shown that
\begin{remark}\label{rem_ifs}
	If $\F$ is a finite family of continuous self-maps on the metric space $X$ and $Y\subset X$ is compact such that
	\begin{itemize}
		\item $\F(X)\subset Y$
		\item maps from $\F$ are contractions on $Y$ 
	\end{itemize}
	then $\F$ is a topologically contracting system on $Y$.
\end{remark}

\begin{proof}
	Let us recall that for every open cover $\U$ of the compact metric space $Y$ there exists so-called Lebesgue number $\lambda$, which guarantees that every subset of $Y$ with diameter $<\lambda$ lies in some element of $\U$. Now take $\alpha = \max_{f\in\F}{Lip f|_Y}<1$ and the natural number $n$ such that $\alpha^n \cdot \diam(Y) < \lambda$. Then for every $f_1,\dots,f_n\in\F$, the image $f_1\circ\dots\circ f_n(Y)$ lies in some set from $\U$ because its diameter is $<\lambda$.
\end{proof}

Thus every iterated function system on the compact metric space $X$ is topologically contracting on $X$. Moreover, we can prove that every topologically contracting system on the Hausdorff space $X$ is also topologically contracting on $\F(X)$, if $\F(X)$ is closed. The area where a given family $\F$ is topologically contracting is in fact arbitrary if only it is a closed set lying between $\F(X)$ and $X$. It is shown in the following
\begin{lemma}\label{lem_top_contr}
	For a finite family $\F$ of continuous self-maps on the Hausdorff space $X$ and every nonempty, closed $Y$ such that $\F(X)\subset Y\subset X$ the following facts are equivalent
	\begin{enumerate}[(i)]
		\item $\F$ is topologically contracting system on $X$
		\item $\F$ is topologically contracting system on $Y$
	\end{enumerate} 
\end{lemma}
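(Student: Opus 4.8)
The plan is to derive both implications straight from Definition \ref{def_top_contr} using two elementary observations. First, since $Y$ is closed in $X$, the set $X\setminus Y$ is open, so any relatively open cover of $Y$ can be completed to an open cover of $X$ by throwing in $X\setminus Y$. Second, since $\F(X)\subset Y$, every composition $f_1\circ\dots\circ f_m$ with $m\geq 1$ sends all of $X$ into $Y$ (indeed $f_1\circ\dots\circ f_m(X)\subset f_1(X)\subset\F(X)\subset Y$), and in particular it sends the nonempty set $Y$ into $Y$, so its image is a nonempty subset of $Y$; by Remark \ref{top_contr_def_rem} the number $n$ produced in either part may be assumed $\geq 1$.

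For (i)$\Rightarrow$(ii): let $\U$ be a relatively open cover of $Y$, choose for each $U\in\U$ an open set $V_U\subset X$ with $V_U\cap Y=U$, and set $\V=\{V_U: U\in\U\}\cup\{X\setminus Y\}$, which is an open cover of $X$. Apply (i) to obtain $n$ such that $f_1\circ\dots\circ f_n(X)$ lies in some member of $\V$ for every $f_1,\dots,f_n\in\F$. Since $f_1\circ\dots\circ f_n(Y)$ is a nonempty subset of $Y$ contained in that member, the member cannot be $X\setminus Y$; hence it is some $V_U$, and $f_1\circ\dots\circ f_n(Y)\subset V_U\cap Y=U$. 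Thus $n$ witnesses that $\F$ is topologically contracting on $Y$.

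For (ii)$\Rightarrow$(i): let $\U$ be an open cover of $X$; then $\{U\cap Y: U\in\U\}$ is a relatively open cover of $Y$, so by (ii) there is $n$ with $f_1\circ\dots\circ f_n(Y)$ contained in some $U\in\U$ for all $f_1,\dots,f_n\in\F$. Given any $f_1,\dots,f_{n+1}\in\F$, the second observation gives $f_{n+1}(X)\subset Y$, hence $f_1\circ\dots\circ f_{n+1}(X)\subset f_1\circ\dots\circ f_n(Y)\subset U$ for some $U\in\U$; so $n+1$ witnesses that $\F$ is topologically contracting on $X$.

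The whole argument is routine; the only real points of care are the use of the nonemptiness of $Y$ — this is precisely what rules out $X\setminus Y$ as the cover element capturing the image — and the harmless shift from $n$ to $n+1$ in the second implication, which could alternatively be absorbed by appealing to Remark \ref{top_contr_def_rem}.
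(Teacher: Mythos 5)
Your proposal is correct and follows essentially the same route as the paper: for (i)$\Rightarrow$(ii) adjoin $X\setminus Y$ to (a lift of) the cover of $Y$ and use $\F(X)\subset Y$ together with Remark \ref{top_contr_def_rem} to exclude $X\setminus Y$ as the capturing set, and for (ii)$\Rightarrow$(i) pass from $n$ to $n+1$ via $f_{n+1}(X)\subset\F(X)\subset Y$. The only (harmless) difference is that you explicitly lift relatively open sets of $Y$ to open sets of $X$, where the paper implicitly takes the cover of $Y$ to consist of sets open in $X$.
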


\begin{proof} \textbf{Step 1.} (i)$\Rightarrow$(ii)
	\newline Take $\U$ the open cover of $Y$. Then $\V=\U\cup\{X\setminus Y\}$ is an open cover of $X$, such that for every $V\in\V$ if $V\cap Y\neq\emptyset$ then $V\in\U$. From the assumption (i) there exists a natural number $n$, such that for every $f_1,\dots,f_n\in\F$ there exists $V\in\V$ such that $f_1\circ\dots\circ f_n(X)\subset V$. Due to the Remark \ref{top_contr_def_rem} we can assume that $n\geq 1$ so the image $f_1\circ\dots\circ f_n(X)$ is also a subset of $\F(X)\subset Y$. Moreover a nonempty set $f_1\circ\dots\circ f_n(Y)\subset f_1\circ\dots\circ f_n(X)\subset V\cap Y$ which means that $V\in\U$ and $\F$ is a topologically contracting system on $Y$.
	\newline \textbf{Step 2.} (i)$\Leftarrow$(ii)
	\newline Take $\U$ the open cover of $X$. It is also an open cover of $Y$ so we can find a number $n$, such that for every $f_1,\dots,f_n\in\F$ there exists $U\in\U$ such that $f_1\circ\dots\circ f_n(Y)\subset U$. Then for arbitrary $f_{n+1}\in\F$ the image $f_1\circ\dots\circ f_{n+1}(X)\subset f_1\circ\dots\circ f_n(\F(X))\subset f_1\circ\dots\circ f_n(Y)\subset U$. Thus $\F$ is a topologically contracting system on $X$.
\end{proof}

	Note that in the proof (i)$\Leftarrow$(ii) we do not use the closeness of the set $Y$ but only the fact that $\F(X)\subset Y\subset X$ so finally we have 
\begin{corollary}
	If $\F$ is topologically contracting on $Y$ and $\F(X)\subset Y\subset X$, then for every closed $Z$ and arbitrary $Z'$ where $\F(X)\subset Z\subset Y\subset Z'\subset X$, the system $\F$ is topologically contracting on $Z$ and on $Z'$.
\end{corollary}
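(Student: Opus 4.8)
The plan is to reduce the whole statement to Lemma \ref{lem_top_contr} by first boosting the hypothesis to the stronger assertion that $\F$ is topologically contracting on the whole space $X$. Since $\F(X)\subset Y\subset X$ and $\F$ is assumed topologically contracting on $Y$, the implication (ii)$\Rightarrow$(i) of that lemma applies; and, as observed just above its statement, the proof of that implication (Step 2) never invokes the closedness of $Y$. So I would conclude from it that $\F$ is topologically contracting on $X$, irrespective of whether $Y$ happens to be closed.

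Next I would treat the closed set $Z$. Here $\F(X)\subset Z\subset X$ and $Z$ is closed, so Lemma \ref{lem_top_contr} applies with $Z$ in the role of $Y$, and the implication (i)$\Rightarrow$(ii) — this time genuinely using closedness, through the auxiliary cover $\V=\U\cup\{X\setminus Z\}$ of $X$ — gives that $\F$ is topologically contracting on $Z$.

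For the arbitrary set $Z'$ with $Y\subset Z'\subset X$ the lemma is not directly available, since $Z'$ need not be closed, so I would argue by hand, imitating Step 2 of the lemma's proof. Given an open cover $\U$ of $Z'$, the restricted family $\{U\cap Y:U\in\U\}$ is an open cover of $Y$ because $Y\subset Z'$; using $\F(X)\subset Y$ together with Remark \ref{top_contr_def_rem}, pick $n\ge 1$ such that $f_1\circ\dots\circ f_n(Y)$ is contained in some member of $\U$ for all $f_1,\dots,f_n\in\F$. Then for any $f_1,\dots,f_{n+1}\in\F$ one has $f_{n+1}(Z')\subset\F(X)\subset Y$, hence $f_1\circ\dots\circ f_{n+1}(Z')\subset f_1\circ\dots\circ f_n(Y)$ lies in some $U\in\U$, which is exactly the required contraction condition on $Z'$.

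I do not expect a real obstacle: the entire content is bookkeeping of inclusion chains and of which half of Lemma \ref{lem_top_contr} actually consumes the closedness hypothesis. The one subtlety worth flagging is that the ``going down'' step to the closed set $Z$ must be routed through $X$ — one cannot apply the lemma to the pair $Z\subset Y$ directly, because $Y$ may fail to be closed — whereas the ``going up'' step to $Z'$ is handled by the elementary observation that a single application of $\F$ already pushes $Z'$ into $Y$, where the contracting behaviour is known.
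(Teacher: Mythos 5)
Your argument is correct and is precisely the one the paper intends: the remark preceding the corollary notes that Step~2 of Lemma~\ref{lem_top_contr} never uses closedness, which is exactly what you exploit to pass up to $X$ and to the arbitrary $Z'$, while the descent to the closed $Z$ goes through the full lemma applied to the pair $Z\subset X$. Your explicit rerun of Step~2 for $Z'$ (using $\F(Z')\subset\F(X)\subset Y$) and your warning that one cannot apply the lemma directly to $Z\subset Y$ are exactly the right bookkeeping; no gap.
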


\section{Topological fractals and their extensions} 

Assume that every compact set is also a Hausdorff space. 

\begin{definition}\label{def_top_fr}
	A pair $(X,\F)$ is called a {\em topological fractal} if $X$ is a compact space, $\F$ is a topologically contracting system on $X$ and $X=\bigcup_{f\in\F}f(X)$. Then the space $X$ and the family $\F$ will be called respectively an {\em underlying space} and a {\em fractal structure} of a topological fractal $(X,\F)$.
\end{definition}
Note that in case of topological fractal $(X,\F)$ we have $\F(X)=X$, so results from the chapter \ref{top_contr_chapter} are trivial. Nevertheless, they will be useful if we want to "extend a topological fractal" to the bigger set. Consider the following problem:
\begin{problem}
	Let $(Y,\F)$ be a topological fractal and $Y\subset X$. How to construct a topologically contracting system $\G$ on $X$ (contains extensions of maps from $\F$) such that $(X,\G)$ is a topological fractal?
\end{problem}
Now we have to extend maps from $\F$ to the set $X$ and maybe define an another family $\P$ of self-maps on $X$. Then the situation that $\F(X)\neq X$ and $\P(X)\neq X$ is possible. Let $\im p = p(X)$ be an image of the map $p\colon X\to X$. We obtain the following result:
\begin{theorem}\label{thm_!}
	Let $X$ be a compact metric space and $\F\cup\P$ be a finite family of continuous maps on $X$ s.t $X=\bigcup_{f\in\F\cup\P}f(X)$ then the following implications hold:
	\begin{equation}
	\tag{!}\label{!!}
	\begin{cases}
	(i) & \F \text{ is a topologically contracting system on }X\\
	(ii)& \forall f\in \F\cup\P \quad \forall p\in\P \quad f(\im p) \text{ is a singleton }
	\end{cases}	 
	\end{equation}
	$$\Downarrow$$
	\begin{equation}
	\tag{!!}\label{!!!}
	\begin{cases}
	(i) & \F \text{ and } \P \text{ are topologically contracting systems on }X\\
	(ii)& \forall f\in \F \quad \forall p\in\P \quad f(\im p) \text{ is a singleton}
	\end{cases}	 
	\end{equation}
	$$\Downarrow$$
	$$(X,\F\cup\P) \text{ is a topological fractal.}$$
	
\end{theorem}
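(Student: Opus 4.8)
The plan is to prove the two implications separately and then invoke Definition \ref{def_top_fr}. The first implication, from the block $(!)$ to the block $(!!)$, is essentially immediate: condition $(!)(i)$ is precisely the statement about $\F$ in $(!!)(i)$, and $(!)(ii)$ contains $(!!)(ii)$ since $\F\subseteq\F\cup\P$. The one new assertion is that $\P$ is a topologically contracting system on $X$, and for this it suffices to observe that for any $p_1,p_2\in\P$ the set $p_1\circ p_2(X)=p_1(\im p_2)$ is a singleton by $(!)(ii)$ (applied with $f=p_1\in\F\cup\P$ and $p=p_2$). Hence, for an arbitrary open cover $\U$ of $X$, the number $n=2$ witnesses topological contraction of $\P$, because a singleton always lies in some member of $\U$.

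For the second implication one only needs to show that $\F\cup\P$ is a topologically contracting system on $X$, since $X=\bigcup_{f\in\F\cup\P}f(X)$ is assumed. Fix an open cover $\U$ of $X$; I would analyse an arbitrary composition $g_1\circ\dots\circ g_N(X)$, with each $g_i\in\F\cup\P$, according to the pattern of which letters lie in $\F$ and which in $\P$. If there is an index $j$ with $g_j\in\F$ and $g_{j+1}\in\P$, then $g_{j+1}\circ\dots\circ g_N(X)\subseteq\im g_{j+1}$, hence $g_j\circ\dots\circ g_N(X)\subseteq g_j(\im g_{j+1})$ is a singleton by $(!!)(ii)$, and applying the remaining maps $g_1,\dots,g_{j-1}$ keeps it a singleton; so $g_1\circ\dots\circ g_N(X)$ lies in some $U\in\U$ and such words cause no trouble.

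The remaining words are exactly those in which no $\F$-letter is immediately followed by a $\P$-letter; then the $\F$-letters form a final segment, so the word has the shape $g_1,\dots,g_a\in\P$ and $g_{a+1},\dots,g_N\in\F$ for some $0\le a\le N$. Let $n_\P$ be a contraction bound for $\P$ with respect to $\U$, taken in the "$\geq n_\P$" form of Remark \ref{top_contr_def_rem}. If $a\ge n_\P$, then $g_1\circ\dots\circ g_N(X)\subseteq(g_1\circ\dots\circ g_a)(X)$ is already contained in some $U\in\U$. The delicate case — and the one I expect to be the main obstacle — is $a<n_\P$ with $N$, hence the length $b=N-a$ of the $\F$-tail, large: the $\P$-prefix is too short to contract on its own, and applying it on top of the long $\F$-tail could in principle spoil the contraction achieved by $\F$.

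The device to handle this case is to pull the cover back through the prefix. For each $q$ in the finite set $\bigcup_{0\le j<n_\P}\P^{j}$ (which contains $\id_X$, covering the all-$\F$ subcase $a=0$), the family $q^{-1}(\U)=\{q^{-1}(U):U\in\U\}$ is an open cover of $X$; since $\F$ is topologically contracting on $X$, there is $n_q$ such that every composition of at least $n_q$ maps from $\F$ sends $X$ into some member of $q^{-1}(\U)$, and we set $M=\max_q n_q$. Then if $b\ge M$, writing $q=g_1\circ\dots\circ g_a$ and $h=g_{a+1}\circ\dots\circ g_N$ gives $g_1\circ\dots\circ g_N(X)=q\bigl(h(X)\bigr)\subseteq q\bigl(q^{-1}(U)\bigr)\subseteq U$ for some $U\in\U$. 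Taking $n=\max\{2,\,n_\P,\,M+n_\P\}$, all the cases above apply to every word of length $\ge n$, which is precisely topological contraction of $\F\cup\P$ on $X$; together with $X=\bigcup_{f\in\F\cup\P}f(X)$ and Definition \ref{def_top_fr} this makes $(X,\F\cup\P)$ a topological fractal. The only steps requiring real care are this pull-back argument and the verification of the finiteness and cover conditions it uses; the combinatorial case split and the singleton collapse are routine.
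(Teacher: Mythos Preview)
Your proof is correct, and the overall structure---the implication $(!)\Rightarrow(!!)$ via the singleton observation, and the word-pattern case split for $(!!)\Rightarrow$ topological fractal---matches the paper exactly. The genuine difference is in the ``delicate case'' of a short $\P$-prefix followed by a long $\F$-tail. The paper handles this metrically: it takes the Lebesgue number $\varepsilon$ of $\U$, uses uniform continuity of the finitely many maps in $\P^{<n_\P}$ to find a single $\delta$ with $\diam Y<\delta\Rightarrow\diam q(Y)<\varepsilon$, and then contracts $\F$ against the cover by $\delta/2$-balls. Your pull-back argument---replacing $\U$ by $q^{-1}(\U)$ for each of the finitely many $q\in\bigcup_{j<n_\P}\P^{j}$ and contracting $\F$ against each of these---achieves the same end purely topologically, without invoking the metric at all. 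This buys you a slightly more general statement (the compact metric hypothesis is never used), at the cost of needing one contraction bound $n_q$ per prefix rather than a single $n_3$; the paper's version is perhaps more concrete but leans on structure that is not actually required.
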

\begin{proof}
	\textbf{Step 1.} (\ref{!!})$\Rightarrow$(\ref{!!!})\newline
	Due to the assumption (ii) we have that for all $p,q\in\P$ and $f\in\F$ sets $f(\im p)$ and $q(\im p)$ are singletons. Thus the singleton $(q\circ p)(X)$ lies in some set from the arbitrary cover of $X$, so $\P$ is topologically contracting on $X$.
	
	\textbf{Step 2.} (\ref{!!!})$\Rightarrow (X,\F\cup\P) \text{ is a topological fractal}$\newline
	For $\U$, an open cover of $X$, there exist five constants: 
	\begin{enumerate}[(1)]
		\item $\exists~n_1\in\N$ such that $\forall f_1,\dots,f_{n_1}\in \F$ the image $f_1\circ\dots\circ f_{n_1}(X)$ lies in some set from $\U$.
		\item $\exists~n_2\in\N$ such that $\forall p_1,\dots,p_{n_2}\in \P$ the image $p_1\circ\dots\circ p_{n_2}(X)$ lies in some set from $\U$.
		\item $\exists~\e>0$, the Lebesgue number of $\U$ (so each set with diameter $<\e$ lies in some set from $\U$). 
		\item consider $\P^{<n_2}=\bigcup_{n=0}^{n_2-1}\P^n$, the family  of continuous functions on the compact space $X$. All such maps are uniformly continuous. Thus there exists $\delta>0$ such that for all $f\in\P^{<n_2}$ and every $Y\subset X$ 
		$$\diam(Y)<\delta \Rightarrow \diam f(Y)<\e$$
		so from (3) $f(Y)$ lies in some set from $\U$. 
		\item Take $\V=\{B(x,\frac{\delta}{2})\}_{x\in X}$  the cover of $X$ containing open balls of diameter $<\delta$. Then there exists $n_3\in\N$ such that for all $f_1,\dots,f_{n_3}\in \F$ the image $f_1\circ\dots\circ f_{n_3}(X)$ lies in some set from $\V$ so its diameter is $<\delta$.
	\end{enumerate}
	Then for $n = \max\{n_1, 2n_2, 2n_3\}$ and every $h_1,\dots,h_{n}\in \F\cup\P$ we consider the following cases: 
	\begin{enumerate}[1.]
		\item if $h_1,\dots,h_{n}\in \F$ then $h_1\circ\dots\circ h_{n}(X)$ lies in some set from $\U$ because $n\geq n_1$ and (1);
		\item if there exists an index $i\in\{1,\dots,n-1\}$ such that $h_i\in\F$ and $h_{i+1}\in\P$, then $h_1\circ\dots\circ h_{n}(X)\subset h_1\circ\dots\circ h_i(\im h_{i+1})$ which is a singleton from assumption (ii), so it lies in some set from $\U$; 
		\item if there exists $k\leq n$ such that $h_1,\dots,h_k\in\P$ and $h_{k+1},\dots,h_n\in\F$, then
		\begin{enumerate}[a)]
			\item for $k\geq n_2$ the image $h_1\circ\dots\circ h_{n}(X)\subset h_1\circ\dots\circ h_{k}(X)$ and from (2) both lie in some set from $\U$; 
			\item for $k< n_2$ we have $n-k\geq n_2+n_3-k >n_3$ so by (5) $\diam h_{k+1}\circ\dots\circ h_n(X)<\delta$ and $h_{1}\circ\dots\circ h_k\in\P^{<n_2}$. Then from (4) a diameter of the image $h_{1}\circ\dots\circ h_k(h_{k+1}\circ\dots\circ h_n(X))$ is $<\e$ so it lies in some set from $\U$.
		\end{enumerate}
	\end{enumerate}
\end{proof}
Hence we have
\begin{corollary}\label{col_!}
	Let $(Y,\F)$ be a topological fractal. A pair $(Y\cup Z,\F'\cup\P)$ is a topological fractal if 
	\begin{enumerate}[1)]
		\item $Y\cup Z= \bigcup_{f\in \F'\cup\P}f(Y\cup Z)$
		\item $\F'$ is a topologically contracting system on $Y\cup Z$ 
		\item $\forall f\in \F'\cup\P \quad \forall p\in\P \quad f(\im p)$ is a singleton.
	\end{enumerate} 
\end{corollary}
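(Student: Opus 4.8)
The plan is to obtain the corollary as an immediate application of Theorem~\ref{thm_!}, taking $X:=Y\cup Z$, letting the family $\F$ of that theorem be $\F'$, and keeping $\P$ as given. First I would make sure the standing hypotheses of Theorem~\ref{thm_!} are in place: compactness and metrizability of $Y\cup Z$ are needed for the pair $(Y\cup Z,\F'\cup\P)$ to be a topological fractal at all, and here $Y$ is compact since it is the underlying space of the topological fractal $(Y,\F)$, while $Z$ is taken compact as well, so $Y\cup Z$ is a compact metric space. That $\F'\cup\P$ is a finite family of continuous self-maps of $Y\cup Z$ is part of the data, and the required equality $Y\cup Z=\bigcup_{f\in\F'\cup\P}f(Y\cup Z)$ is exactly hypothesis~1).

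Next I would match the two remaining hypotheses of the corollary with the antecedent~(\ref{!!}) of Theorem~\ref{thm_!}. Clause~(i) of~(\ref{!!}) demands that $\F'$ be a topologically contracting system on $Y\cup Z$, which is precisely hypothesis~2), and clause~(ii) of~(\ref{!!}) demands that $f(\im p)$ be a singleton for every $f\in\F'\cup\P$ and every $p\in\P$, which is precisely hypothesis~3). Hence~(\ref{!!}) holds, so the chain of implications established in Theorem~\ref{thm_!}, namely (\ref{!!})$\Rightarrow$(\ref{!!!}) and then (\ref{!!!}) in turn yielding that $(X,\F\cup\P)$ is a topological fractal, shows that $(Y\cup Z,\F'\cup\P)$ is a topological fractal, as claimed.

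In short, there is nothing substantial left to prove: all the real work — upgrading the statement that $\F'$ alone is topologically contracting to the statement that $\F'$ and $\P$ are both topologically contracting, and then to the statement that $\F'\cup\P$ is topologically contracting on $Y\cup Z$ — has already been carried out inside the proof of Theorem~\ref{thm_!}, and the singleton condition~3) is exactly what feeds Step~1 of that proof. The only point that calls for a moment's care, and where I would expect a referee to look, is checking that $Y\cup Z$ really is a compact metric space, so that Theorem~\ref{thm_!} (which is stated for compact metric $X$) applies verbatim; apart from supplying the compactness of $Y$ and motivating the notation $\F'$ as an extension of the maps in $\F$, the hypothesis that $(Y,\F)$ is a topological fractal plays no further role in the argument.
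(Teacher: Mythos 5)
Your proposal is correct and is exactly how the paper obtains this corollary: it is stated with no separate proof ("Hence we have") as an immediate instance of Theorem~\ref{thm_!} with $X=Y\cup Z$ and $\F$ replaced by $\F'$, the three numbered hypotheses matching the hypotheses of (\ref{!!}) verbatim. Your remark that one must implicitly take $Y\cup Z$ compact metric, and that the assumption "$(Y,\F)$ is a topological fractal" does no real logical work here beyond that, is a fair observation about a point the paper also leaves tacit.
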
	
The family $\F'$ usually contains extensions of maps from $\F$, such that $Y=\F(Y) =\F'(Y\cup Z)$ and $Z=\P(Y)=\P(Y\cup Z)$. Then the set $Z$ is a finite union of continuous images of $Y$, for example: 
\begin{itemize}
	\item $Y$ is an arc and $Z$ is a finite union of Peano continua;
	\item $Y$ is a convergent sequence and $Z$ is a scattered space of finite set of accumulation points;
	\item $Y$ is the Cantor set and $Z$ is an arbitrary compact space.
\end{itemize} 
The union of these sets satisfies the assumption of Corollary \ref{col_!} if they are disjoint or have only one common element. See figure \ref{fig_empty} and the following statement: 

\begin{lemma}\label{ex1}
	For every topological fractal $(Y,\F)$ and $Z$ - finite union of continuous images of $Y$ such that $Y\cap Z$ is an empty set or a singleton, there exists a family $\F'\cup\P'$ such that $(Y\cup Z,\F'\cup\P')$ is a topological fractal. 
\end{lemma}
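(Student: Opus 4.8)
The plan is to extend the fractal structure $\F$ to $Y\cup Z$ by adjoining a family $\P'$ of ``collapsing'' maps and then to invoke Corollary \ref{col_!}. Write $Z=Z_1\cup\dots\cup Z_k$ where $Z_j=g_j(Y)$ for continuous surjections $g_j\colon Y\to Z_j$; since $Y$ and each $Z_j$ are compact, so are $Z$ and $Y\cup Z$, and $Y$ is a closed subset of $Y\cup Z$. Pick points $a_f\in f(Y)$ for $f\in\F$ and $b_j\in Z_j$ for $j\le k$, with the extra requirement that, when $Y\cap Z=\{pt\}$, one has $a_f=f(pt)$ and $b_j=g_j(pt)$ (no requirement in the disjoint case). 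Define
$$\F'=\{f'\colon f\in\F\},\qquad \P'=\{g_1',\dots,g_k'\}$$
by $f'|_Y=f$, $f'|_Z\equiv a_f$, $g_j'|_Y=g_j$ and $g_j'|_Z\equiv b_j$. Since $Y$ and $Z$ are closed in $Y\cup Z$ and the two pieces of each map agree on $Y\cap Z$ (vacuously in the disjoint case, and by the choice of $a_f$, $b_j$ otherwise), the pasting lemma makes every $f'$ and $g_j'$ a continuous self-map of $Y\cup Z$; thus $\F'\cup\P'$ is a finite family of continuous self-maps of $Y\cup Z$.

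Next I would verify the three hypotheses of Corollary \ref{col_!}. For (1): $\im f'=f(Y)\cup\{a_f\}=f(Y)$ and $\im g_j'=Z_j\cup\{b_j\}=Z_j$, so $\bigcup_{h\in\F'\cup\P'}h(Y\cup Z)=\bigl(\bigcup_{f\in\F}f(Y)\bigr)\cup\bigl(\bigcup_{j\le k}Z_j\bigr)=Y\cup Z$, using that $\bigcup_{f\in\F}f(Y)=Y$ because $(Y,\F)$ is a topological fractal. For (3): every $p\in\P'$ is some $g_j'$ with $\im p=Z_j\subset Z$, and every $h\in\F'\cup\P'$ is constant on $Z$, so $h(\im p)=h(Z_j)$ is a singleton. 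For (2): each $f'$ sends $Y\cup Z$ into $Y$, so $\F'(Y\cup Z)\subset Y$ with $Y$ a nonempty closed subset of $Y\cup Z$; by Lemma \ref{lem_top_contr} it then suffices to show $\F'$ is topologically contracting on $Y$, and since $f'|_Y=f$ with $f(Y)\subset Y$ we get $f_1'\circ\dots\circ f_n'(Y)=f_1\circ\dots\circ f_n(Y)$ for all $f_1,\dots,f_n\in\F$, which is contained in a single member of any prescribed open cover of $Y$ once $n$ is large enough, because $\F$ is topologically contracting on $Y$. With (1)--(3) in hand, Corollary \ref{col_!} yields that $(Y\cup Z,\F'\cup\P')$ is a topological fractal, as claimed.

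The argument is essentially bookkeeping, so I do not expect a genuine obstacle; the only delicate point is the choice of the constant values $a_f$ and $b_j$. They must be chosen so that the $Y$- and $Z$-parts of each extended map glue continuously across the common point (which forces $a_f=f(pt)$ and $b_j=g_j(pt)$ in the singleton case), while at the same time passing from $Y$ to $Y\cup Z$ must not enlarge the images $\im f'$ and $\im g_j'$ beyond $f(Y)$ and $Z_j$ — this is exactly what keeps hypotheses (1) and (3) valid, and it is why $a_f$ is taken in $f(Y)$ and $b_j$ in $Z_j$. The only degenerate case, $Z=\emptyset$, is trivial: then $Y\cup Z=Y$ and $(Y,\F)$ already witnesses the statement (take $\P'=\emptyset$).
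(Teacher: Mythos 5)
Your proof is correct and follows essentially the same route as the paper: extend each $f\in\F$ and each of the finitely many surjections $Y\to Z_j$ to $Y\cup Z$ by making it constant on $Z$ (with the constant forced to be the image of the common point in the singleton case), and then check the three hypotheses of Corollary \ref{col_!}. Your verification is in fact slightly more detailed than the paper's, which simply asserts that the extended families satisfy the corollary's assumptions.
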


\begin{proof}
Due to the fact that $Z$ is a finite union of continuous images of $Y$, denote by $\P$ the family of continuous maps $Y\to Z$ such that $Z=\P(Y)$. Now we define families $\F'$ and $\P'$ as follows.\\ \\
If $Y\cap Z = \emptyset$, take an arbitrary $y_0\in Y$ and $z_0\in Z$. Then for every $f\in \F$ and $p\in \P$ let $f',p'\colon Y\cup Z\to Y\cup Z$ be the maps such that 
$$f'(x)=
\begin{cases}
f(x); & x\in Y \\
y_0;&  x\in Z
\end{cases}
\quad \quad
p'(x)=
\begin{cases}
p(x); & x\in Y \\
z_0;&  x\in Z.
\end{cases}
$$
If $Y\cap Z = \{x_0\}$ then 
$$f'(x)=
\begin{cases}
f(x); & x\in Y \\
f(x_0);&  x\in Z
\end{cases}
\quad \quad
p'(x)=
\begin{cases}
p(x); & x\in Y \\
p(x_0)&  x\in Z.
\end{cases}
$$
Then $\F'=\{f'; f\in \F\}$ and $\P'=\{p' ; p\in\P\}$ satisfy all assumptions of Corollary \ref{col_!} which ends the proof. 
\end{proof}

Figure \ref{fig_empty} shows the idea used in the proof and some examples of underlying spaces for some topological fractal.
\begin{figure}[h]
	\includegraphics[scale=0.4]{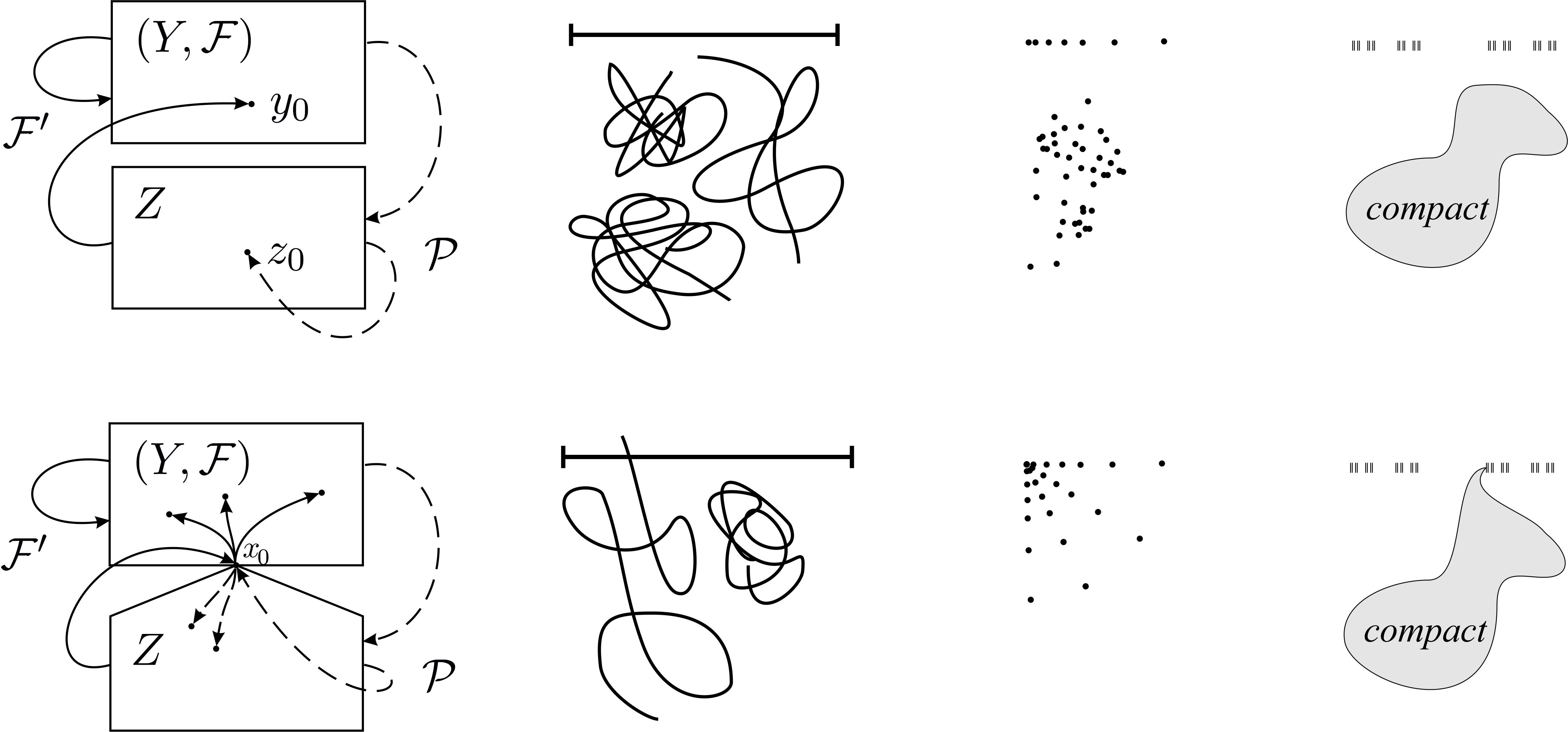}
	\caption{An outline of the proof of the Lemma \ref{ex1} and examples for $Y\cap Z = \emptyset$ and $Y\cap Z = \{x_0\}$}\label{fig_empty}
\end{figure}

When the intersection of sets $Y$ and $Z$ has more than one point the situation is more complicated. 
\begin{remark}
	There exist examples of topological fractal $(Y,\F)$ and a set $Z$ where $Y\cap Z$ is finite and $(Y\cup Z,\F'\cup\P')$ is a topological fractal for some fractal structure $\F'\cup\P'$.
\end{remark}
For example $Y\cup Z$ is a Peano continuum and $Y$ is the closure of so-called free arc. This example had been described in \cite{FN}. The idea of the proof is shown in the Figure \ref{free_arc}.
\begin{figure}[h]
	\includegraphics[scale=0.4]{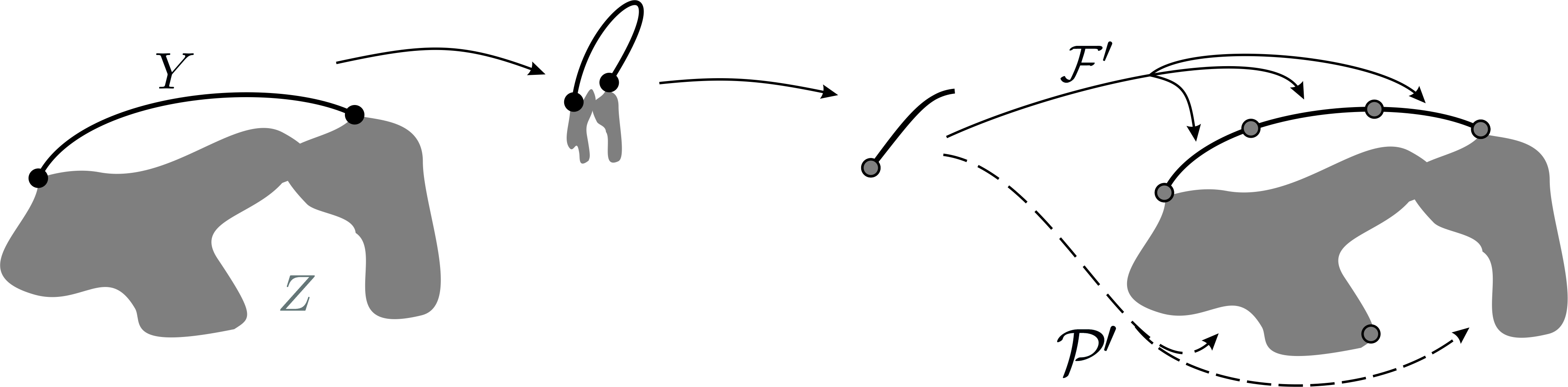}
	\caption{A Peano continuum with a free arc is a topological fractal }\label{free_arc}
\end{figure}

The assumption (\ref{!!!}) from Theorem \ref{thm_!} is more symmetric than (\ref{!!}). Using it we obtain also 
\begin{corollary}\label{col_!!}
	Let $(Y,\F)$ and $(Z,\P)$ be topological fractals and $Y,Z$ be subsets of the same topological space. A pair $(Y\cup Z,\F'\cup\P')$ is a topological fractal if 
	\begin{enumerate}[1)]
		\item $Y\cup Z= \bigcup_{f\in \F'\cup\P'}f(Y\cup Z)$
		\item $\F'$ and $\P'$ are topologically contracting systems on $Y\cup Z$ 
		\item $f(\im p)$ is a singleton for all $f\in\F', p\in\P'$.
	\end{enumerate} 
\end{corollary}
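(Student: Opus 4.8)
The plan is to read this corollary off directly from the second implication of Theorem~\ref{thm_!}. I would put $X:=Y\cup Z$ and simply rename the two families, taking $\F:=\F'$ and $\P:=\P'$ in the notation of that theorem. Since $Y$ and $Z$ are underlying spaces of topological fractals they are compact, so $X$ is compact as a union of two compact sets; and — as in all the intended applications, where the ambient space is metrizable — $X$ is then a compact metric space, which is the standing hypothesis of Theorem~\ref{thm_!}. The families $\F'$, $\P'$ are finite (being topologically contracting systems, by Definition~\ref{def_top_contr}) and consist of continuous self-maps, and assumption~1) is verbatim the requirement $X=\bigcup_{f\in\F'\cup\P'}f(X)$ that Theorem~\ref{thm_!} imposes on $\F'\cup\P'$.

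It then remains only to notice that assumptions 2) and 3) are exactly condition~(\ref{!!!}) for the pair $(\F',\P')$ on $X$: assumption~2) is part~(i) of (\ref{!!!}), that $\F'$ and $\P'$ are topologically contracting systems on $X=Y\cup Z$; and assumption~3) is part~(ii) of (\ref{!!!}), that $f(\im p)$ is a singleton for all $f\in\F'$ and $p\in\P'$. Hence (\ref{!!!}) holds, and its conclusion in Theorem~\ref{thm_!} is precisely that $(X,\F'\cup\P')=(Y\cup Z,\F'\cup\P')$ is a topological fractal. So the argument is essentially a one-line invocation of Theorem~\ref{thm_!}; the hypotheses that $(Y,\F)$ and $(Z,\P)$ are themselves topological fractals carry no logical weight beyond fixing the intended reading of $\F'$ and $\P'$ as extensions of the two given fractal structures (as in Corollary~\ref{col_!} and Lemma~\ref{ex1}).

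I do not expect any real obstacle here, but two points deserve a word. First, one might hope to obtain the statement by a direct application of Corollary~\ref{col_!} with $\P'$ in place of $\P$; this does \emph{not} work, because the singleton hypothesis of Corollary~\ref{col_!} is asymmetric — it would also require $p'(\im p)$ to be a singleton for all $p',p\in\P'$ — and that is not among the present hypotheses. This is exactly why the symmetric form (\ref{!!!}) of Theorem~\ref{thm_!}, reached there through the auxiliary implication $(\ref{!!})\Rightarrow(\ref{!!!})$, is the appropriate tool. Second, metrizability of $Y\cup Z$ is needed only because Theorem~\ref{thm_!} is phrased for compact metric spaces (its proof uses a Lebesgue number and uniform continuity); in every situation where this corollary is applied the sets lie inside a Peano continuum, hence inside a metrizable space, so $Y\cup Z$ is automatically a compact metric space and nothing further is required.
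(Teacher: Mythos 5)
Your proposal is correct and is exactly the paper's intended argument: the paper states this corollary without a separate proof, deriving it immediately from the symmetric condition (\ref{!!!}) of Theorem \ref{thm_!} applied to $X=Y\cup Z$, just as you do. Your side remarks (why Corollary \ref{col_!} cannot be invoked directly because of its asymmetric singleton condition, and why metrizability of $Y\cup Z$ is harmless in the intended applications) are accurate but not needed beyond what the paper already assumes.
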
	
Again, the families $\F'$ and $\P'$ usually contain extensions of maps from $\F$ and $\P$, and $Y=\F(Y) =\F'(Y\cup Z)$ and $Z=\P(Z)=\P'(Y\cup Z)$.

Looking for sufficient conditions for being underlying space for some topological fractal, and using Lemma \ref{ex1}, we obtain the following lemma which can be easily proved inductively.

\begin{lemma}
	A finite union of $\{X_i\}_{i=1}^n$ underlying spaces of topological fractals such that $X_i\cap X_j$ is an empty set or a singleton for every $i\neq j$, is underlying space for some topological fractal. 
\end{lemma}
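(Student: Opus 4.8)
The plan is to argue by induction on $n$, attaching one piece at a time via Lemma \ref{ex1} and Corollary \ref{col_!!}. For $n=1$ there is nothing to prove. So assume the statement holds for every family of fewer than $n$ such spaces, fix $X_1,\dots,X_n$ with all pairwise intersections empty or single points, and form the \emph{intersection graph} $G$ on $\{1,\dots,n\}$ in which $i$ and $j$ are joined precisely when $X_i\cap X_j\neq\emptyset$. Any sub-family of the $X_i$ again satisfies the hypothesis, so the inductive hypothesis will apply to any union of at most $n-1$ of them.

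There are two routine cases. If $G$ is disconnected, partition $\{1,\dots,n\}$ into nonempty sets $S,T$ with no edge between them; then $Y=\bigcup_{i\in S}X_i$ and $Z=\bigcup_{i\in T}X_i$ are disjoint, each (by the inductive hypothesis) an underlying space of a topological fractal with structures $\F$ on $Y$ and $\P$ on $Z$. Extending each $f\in\F$ by a constant value in $Y$ on $Z$, and each $p\in\P$ by a constant value in $Z$ on $Y$, gives families $\F',\P'$ for which conditions (1)--(3) of Corollary \ref{col_!!} are immediate (this is the $Y\cap Z=\emptyset$ construction from the proof of Lemma \ref{ex1}), so $(Y\cup Z,\F'\cup\P')$ is a topological fractal. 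If instead $G$ is connected and has a vertex $j$ of degree $1$ with unique neighbour $k$, put $Y=\bigcup_{i\neq j}X_i$ and $Z=X_j$; then $Y\cap Z=X_k\cap X_j$ is a single point $x_0$, and the one-point-gluing construction — extend $f\in\F$ by the constant $f(x_0)$ on $Z$, and $p\in\P$ (the fractal structure of $X_j$) by the constant $p(x_0)$ on $Y$ — once more produces families satisfying Corollary \ref{col_!!}. This closes the step whenever $G$ is disconnected or has a leaf, and all verifications amount to re-reading the proofs of Lemma \ref{ex1} and Theorem \ref{thm_!}.

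The main obstacle is the remaining case: $G$ connected with minimum degree at least $2$, hence containing a cycle, the prototype being a circle presented as three arcs glued cyclically. Here no $X_j$ can be detached along at most one point, so one is forced to glue along two (or more) points simultaneously — which is exactly the ``more complicated'' situation flagged after Corollary \ref{col_!} and is not covered by Lemma \ref{ex1}. I expect this to be where the real work lies: one should isolate a cyclically glued sub-family $X_{i_1},\dots,X_{i_k}$ and treat it directly (when its pieces contain arcs this is the free-arc phenomenon of \cite{FN}; when they are totally disconnected the union is recognizably a Cantor set, and the mixed case interpolates between these), and then reattach the remaining pieces in tree-like fashion by the one-point construction of the previous paragraph. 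Once the extended families are written down, the verification is again just checking hypotheses (1)--(3) of Corollary \ref{col_!!}; it is the two-point gluing inside the cyclic part that needs genuine care rather than a routine appeal to Lemma \ref{ex1}.
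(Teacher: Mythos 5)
Your induction covers exactly the cases that the paper's one-line justification covers: when the intersection graph is a forest (or is disconnected), repeatedly detaching a leaf and gluing along at most one point via the constructions of Lemma \ref{ex1} and Corollary \ref{col_!!} works, and your verification of conditions (1)--(3) there is correct. (One small point: Lemma \ref{ex1} as stated requires $Z$ to be a finite union of continuous images of $Y$, which is not part of the present hypothesis, so the one-point gluing of two arbitrary topological fractals should be routed entirely through Corollary \ref{col_!!}; the extension-by-a-constant construction is the same.)

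The genuine gap is the one you flag yourself: when the intersection graph is connected with minimum degree at least $2$, every ordering of the pieces forces, at some stage, an attachment along at least two distinct points, and then condition 3) of Corollary \ref{col_!!} fails --- a map $f$ extending the structure on the already-built part would have to identify the two attachment points for $f(\im p)$ to be a singleton, and one cannot in general even define the extensions continuously and constantly off the old part (in a triangle $X_1,X_2,X_3$ with three distinct gluing points, the point of $X_2\cap X_3$ lies in both $X_2\setminus X_1$ and $X_3\setminus X_1$, so the two candidate constants $f(p_{12})$ and $f(p_{13})$ would have to coincide). Your proposed repair --- isolate the cyclic subfamily and treat it by the free-arc method of \cite{FN} or by recognizing a Cantor set --- is not a proof: the $X_i$ are arbitrary compact Hausdorff underlying spaces, need not contain arcs, and a cyclically glued union of totally disconnected topological fractals need not be a Cantor set (take finite pieces, or convergent sequences), so neither named technique applies in general and the ``mixed case'' is not addressed at all. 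Hence the cyclic case remains unproven in your write-up. For what it is worth, the paper itself offers no argument beyond the assertion that the lemma ``can be easily proved inductively'' using Lemma \ref{ex1}; that assertion only covers the forest case you completed, so the obstruction you identified is a real one that the paper does not resolve either, and closing it would require a genuinely new construction (in the spirit of Theorem \ref{thm_!} but without the singleton condition on multi-point overlaps).
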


\begin{remark}
	Note that the assumption 2) in both Corollary \ref{col_!} and Corollary \ref{col_!!} is satisfied if $\F'$ is topologically contracting system on $Y=\F'(Y\cup Z)$ (and $\P'$ - topologically contracting on $Z=\P'(Y\cup Z)$). Then due to the Lemma \ref{lem_top_contr} we obtain its (their) topological contractivity on the whole union $Y\cup Z$.  
\end{remark}

\section{Peano continua as topological fractals}\label{main_chapter} 

We are interested in the old question posted by Hata in 1985:
\begin{problem}\label{hata}
	Is every Peano continuum an underlying space for some topological fractal?
\end{problem}

 In fact Hata in his paper \cite[page 392]{H} asked is every locally connected continuum $Q$ is an invariant set ($\F(Q)=Q$) for the finite family $\F$ of so-called weak contractions. Now we know  that such space $Q$ with family $\F$ is exactly the same as topological fractal. Namely, a space $Q$ is an underlying space for some topological fractal if and only if there exists a finite family $\F$ of weak contractions such that $\F(Q)=Q$ (see \cite[6.4]{BKNNS}).

In the \cite{FN} we show that a Peano continuum $X$ with a free arc is an underlying space for some topological fractal. The idea of the proof, shown in Figure \ref{free_arc}, is to use a fractal structure $\F$ on the arc $Y$ and extend it to the whole $X$, like in the Corollary \ref{col_!}. Moreover, using continuous images of arc $Y$ we could cover the whole $X\setminus Y$. 

This prompted us to look for special subsets of a Peano continuum which are underlying space for some topological fractal and their continuous images can cover the whole Peano continuum. We also want to use continuous maps which are constant outside some open set such that we can guarantee that the assumption (ii) from Theorem \ref{thm_!} will be met. It is easy to prove that
\begin{remark}\label{ext}
	For $U\subsetneq A\subset X$ where $U$ is open, $A$ is a closed subset of topological space $X$ and for every continuous map $f\colon A\to A$ constant on $A\setminus U$ (so $f(A\setminus U)=\{x_0\}$), we can construct its continuous extension $\bar{f}\colon X\to X$ constant on $X\setminus U$ in the following way: for every $x\in X$
	$$\bar{f}(x)=
	\begin{cases}
	f(x) &, x\in U\\
	x_0 &, x\in X\setminus U.
	\end{cases}
	$$
\end{remark}
Note that the construction of $\bar{f}$ implies that $\bar{f}(X)=A$.

Here we present one of our main theorems for a Peano continuum, which gives sufficient conditions for being the underlying space for some topological fractal. 	
\begin{theorem}\label{thm_top_fr}
	A Peano continuum $X$ is an underlying space for some topological fractal if there exists a topological fractal $(A,\F)$ such that $A\subset X$ and if there exists a nonempty open set $U\subsetneq A$ such that the following assumptions hold
		\begin{enumerate}[(i)]
			\item\label{t1} all maps from $\F$ are constant on $A\setminus U$
			\item\label{t3} there exists $\bigcup_{i=1}^n P_i$ finite union of Peano continua such that $X\setminus A\subset \bigcup_{i=1}^n P_i \subset X\setminus U$.
		\end{enumerate}
\end{theorem}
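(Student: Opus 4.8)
The strategy is to apply Corollary~\ref{col_!} with $Y=A$ and $Z=\bigcup_{i=1}^n P_i$, so that $Y\cup Z = X$ by assumption~(\ref{t3}). I would first replace each map $f\in\F$ by its extension $\bar f\colon X\to X$ as in Remark~\ref{ext}; this is legitimate because, by~(\ref{t1}), every $f\in\F$ is constant on $A\setminus U$, and since $A$ is closed in the Peano continuum $X$ (being compact), Remark~\ref{ext} produces a continuous $\bar f$ with $\bar f(X)=A$. Set $\F'=\{\bar f: f\in\F\}$. Then $\F'(X)=A$, and since $\F$ is topologically contracting on $A$ and $\F'(X)=A\subset A\subset X$, Lemma~\ref{lem_top_contr} immediately gives that $\F'$ is topologically contracting on $X$ --- this disposes of assumption~2) of Corollary~\ref{col_!}.

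\textbf{Constructing $\P$.} Next I must cover the rest of $X$, i.e.\ $X\setminus A$, by continuous images of $X$ that are constant outside $U$ (so that assumption~(ii) of Theorem~\ref{thm_!}, equivalently assumption~3) of Corollary~\ref{col_!}, holds). For each Peano continuum $P_i\subset X\setminus U$, use the Hahn--Mazurkiewicz theorem to pick a continuous surjection $[0,1]\twoheadrightarrow P_i$. The idea is to route this through $A$: since $U\subsetneq A$ is a nonempty open subset of the Peano continuum $A$ (note $A$, being a topological fractal that is connected --- or one can pass to a suitable subcontinuum --- is itself a Peano continuum, or at least contains an arc inside $\bar U$), choose an arc $J\subset A$ and a continuous surjection $A\to[0,1]$ obtained by first retracting/mapping $A$ onto $J\cong[0,1]$; composing with $[0,1]\twoheadrightarrow P_i$ yields a continuous surjection $g_i\colon A\to P_i$. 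Finally compose with a retraction-type map $X\to A$, or more simply precompose with one of the $\bar f$'s, to obtain $p_i\colon X\to X$ with $p_i(X)=P_i$. Crucially, I would arrange $p_i$ to be constant on $X\setminus U$: this is automatic if $p_i$ factors through a map that is already constant there, e.g.\ take $p_i=g_i\circ \bar f_0\circ(\text{collapse } X\setminus U)$ --- the point is that $\bar f_0$ is constant on $X\setminus U$, so any composite with it on the right inherits this. Set $\P=\{p_i: i=1,\dots,n\}$; then $\P(X)=\bigcup_i P_i\supset X\setminus A$, hence $\F'(X)\cup\P(X)=A\cup\bigcup_i P_i = X$, giving assumption~1).

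\textbf{Verifying the singleton condition and concluding.} It remains to check assumption~3) of Corollary~\ref{col_!}: for every $h\in\F'\cup\P$ and every $p\in\P$, the set $h(\im p)$ is a singleton. Since $\im p = P_i\subset X\setminus U$ for some $i$, and every map in $\F'$ is (by construction, Remark~\ref{ext}) constant on $X\setminus U$, we get $\bar f(P_i)=\{x_0\}$ for $f\in\F$; and every $p_j\in\P$ is, by the construction above, also constant on $X\setminus U$, so $p_j(P_i)$ is a singleton as well. Thus all three hypotheses of Corollary~\ref{col_!} hold, and we conclude that $(X,\F'\cup\P)$ is a topological fractal, so $X$ is an underlying space for some topological fractal. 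I expect the main obstacle to be the second paragraph: one must produce the maps $p_i$ simultaneously surjecting onto the $P_i$, being globally continuous on $X$, \emph{and} being constant on $X\setminus U$ --- the tension is that $P_i$ need not lie in $A$, so the map cannot simply be an extension of a self-map of $A$; the resolution is to factor through the arc in $\bar U\subset A$ using Hahn--Mazurkiewicz, and to force constancy outside $U$ by precomposing with a map (such as one of the $\bar f$) that already annihilates $X\setminus U$. A secondary technical point is ensuring $\bar U$ (or $A$ itself) genuinely contains an arc, which follows since $A$ is a nondegenerate Peano continuum (a topological fractal that is a continuum is locally connected by Hata's theorem, cited in the introduction).
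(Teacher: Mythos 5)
Your proposal follows essentially the same route as the paper: extend $\F$ over $X$ via Remark~\ref{ext}, manufacture $\P$ by factoring through $[0,1]$ and then mapping onto the $P_i$ by Hahn--Mazurkiewicz, and conclude from the singleton condition via Theorem~\ref{thm_!} (equivalently Corollary~\ref{col_!}). The one step you leave under-justified is the production of the surjection onto $[0,1]$ that is constant off $U$. First, your appeal to Hata's theorem to find an arc presupposes that $A$ is connected, which is not among the hypotheses (a topological fractal need not be connected). Second, and more importantly, your map $A\to[0,1]$ must remain surjective after precomposition with $\bar f_0$, i.e.\ it must already be surjective on the possibly much smaller set $\bar f_0(X)=f_0(A)$; otherwise $p_i(X)$ need not be all of $P_i$, and then $\F'(X)\cup\P(X)$ may fail to cover $X$. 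The paper sidesteps both issues by first choosing $f_0\in\F$ with $|f_0(X)|\ge 2$ (possible because $A\supset U$ is infinite), observing that $f_0(X)$ is then a nondegenerate subcontinuum of $X$, and applying Urysohn/Tietze directly to $f_0(X)$ to obtain a surjection $\phi\colon f_0(X)\to[0,1]$, so that each $p_i=e_i\circ\phi\circ f_0$ is automatically onto $P_i$ and constant on $X\setminus U$. If you prefer your arc-and-retraction variant, you must place the arc $J$ inside $\bar f_0(X)$ for one fixed nondegenerate $\bar f_0$ (which does contain an arc, being a nondegenerate Peano continuum as a continuous image of $X$, and is a retract since $[0,1]$ is an absolute retract). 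With that repair your argument is correct and coincides with the paper's.
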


\begin{proof} 
	Due to the Remark \ref{ext} we can assume that $\F$ contains continuous self-maps on $X$, constant outside the set $U$. Then $\F$ is a topologically contracting system on $X$ and $A=\F(X)$. We shall construct a finite family $\P$ of continuous self-maps on $X$ such that:
	\begin{enumerate}[a)]
		\item $\P(X) =\bigcup_{i=1}^n P_i $
		\item for all $p\in\P$ the map $p|_{X\setminus U}=const$.
	\end{enumerate} 
	There is nothing to prove if $X$ is a singleton. Then $(X,\id_X)$  is a topological fractal. 
	
	If $|X|>2$, then every open subset $U$ of Peano continuum $X$ is infinite. Thus the finite union $\bigcup_{f\in\F}f(X)=\F(X)=A\supset U$ is also infinite. This means that there exists $f_0\in\F$ such that $f_0(X)$ has at least two points. It is also a compact and connected set as a continuous image of the Peano continuum $X$. 
	Now we take a continuous map $\phi\colon f_0(X) \to \R$ such that its image $\im \phi$ is a continuum with at least two points (its existence is a result of Tietze extension theorem). This means that $\im \phi$ is an interval (we can assume that $[0,1]$) - see Figure \ref{proof}.
	\begin{figure}[h]
		\includegraphics[scale=0.8]{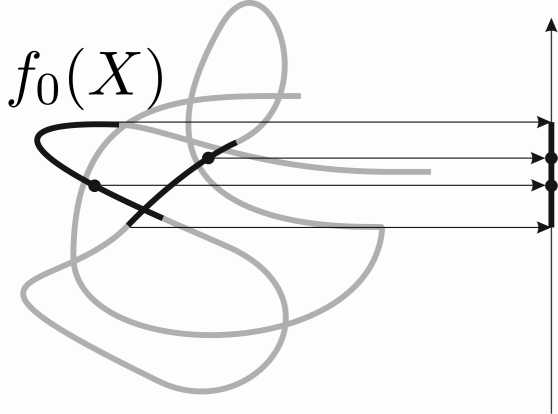}
		\caption{$\phi\colon f_0(X) \to \R$}\label{proof}
	\end{figure}
	
	For every $i=1,\dots,n$ and Peano continuum $P_i$ there exists an embedding $e_i\colon [0,1]\to P_i$. Now define $\P=\{e_i\circ\phi\circ f_0\}_{i=1}^n$, a finite family satisfying a) and b). This implies that $X=\F(X)\cup\P(X)$ and $f(\im p)$ is a singleton for all $f\in\F\cup\P$ and $p\in\P$. Family $\F\cup\P$ satisfies the assumption (\ref{!!}) from Theorem \ref{thm_!} so $(X,\F\cup\P)$ is a topological fractal.
	
\end{proof}
Looking for the topological fractal $(A,\F)$ satisfying assumptions from the above theorem we have to remember about the following
\begin{itemize}
	\item $A$ is a finite union of compact and connected sets (because $A=\F(X)$);
	\item if $X$ is infinite then also $A$ is infinite (because it contains an open set).
\end{itemize}

Several observations of a Peano continuum will lead us to the conclusion that the condition (ii) from Theorem \ref{thm_top_fr} is always true for some $U$ open subset of a given set $A$ with nonempty interior. First, let us recall that a Peano continuum is a continuous image of the unit interval and every continuous map on the compact space is uniformly continuous. Such uniform continuity gives us the following 
\begin{lemma}\label{engelking}
	Every Peano continuum in every metric is a finite union of Peano continua of an arbitrary small diameter.
\end{lemma}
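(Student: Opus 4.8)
The plan is to exploit the Hahn--Mazurkiewicz description of a Peano continuum as a continuous image of $[0,1]$, combined with uniform continuity. Fix a compatible metric $d$ on the Peano continuum $X$ and a number $\e>0$. Choose a continuous surjection $\gamma\colon[0,1]\to X$. Since $[0,1]$ is compact, $\gamma$ is uniformly continuous, so there is $\delta>0$ such that $d(\gamma(s),\gamma(t))<\e$ whenever $|s-t|<\delta$.

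Next I would partition the unit interval into finitely many closed subintervals $I_1,\dots,I_n$, each of length $<\delta$; for instance $I_k=[\tfrac{k-1}{N},\tfrac{k}{N}]$ with $N\in\N$ chosen so that $\tfrac1N<\delta$. Put $P_k=\gamma(I_k)$. Then $\bigcup_{k=1}^n P_k=\gamma\bigl(\bigcup_{k=1}^n I_k\bigr)=\gamma([0,1])=X$, so the sets $P_k$ cover $X$. Each $P_k$ is nonempty, compact and connected, being a continuous image of the compact connected set $I_k$; moreover $I_k$ is homeomorphic to $[0,1]$, so $P_k$ is a continuous image of $[0,1]$, hence a Peano continuum by the Hahn--Mazurkiewicz theorem. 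Finally, if $x,y\in P_k$, write $x=\gamma(s)$ and $y=\gamma(t)$ with $s,t\in I_k$; then $|s-t|<\delta$, so $d(x,y)<\e$, and therefore $\diam(P_k)\le\e$. If strict inequality is wanted, one simply runs the argument with $\e/2$ in place of $\e$.

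There is essentially no obstacle here beyond bookkeeping. The only two points requiring a little care are: that the metric is arbitrary and need not arise from any chosen quotient presentation of $X$ --- but compactness makes $\gamma$ uniformly continuous in whatever compatible metric we fix --- and that a closed subinterval of $[0,1]$ is again an arc, so that the pieces $P_k$ genuinely satisfy the Hahn--Mazurkiewicz hypothesis and hence are Peano continua in their own right. This completes the proof.
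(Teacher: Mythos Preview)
Your argument is correct and is precisely the approach the paper has in mind: the paper does not give a formal proof of this lemma, but the sentence immediately preceding it (``a Peano continuum is a continuous image of the unit interval and every continuous map on the compact space is uniformly continuous'') is exactly the outline you have fleshed out. There is nothing to add.
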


Now we prove the announced proposition
\begin{lemma}\label{lem_finite_union}
	Let $X$ be a Peano continuum and $A\subset X$ has nonempty interior. Then there exists a nonempty open set $U\subset A$ such that $X\setminus U$ is a finite union of Peano continua.
\end{lemma}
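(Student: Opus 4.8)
The plan is to exploit Lemma \ref{engelking}: decompose the Peano continuum $X$ into finitely many Peano subcontinua of small diameter, and then take $U$ to be a small open ball deep inside the interior of $A$, chosen to avoid all of the ``bad'' pieces.

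\textit{Step 1: set up the decomposition.} Since $A$ has nonempty interior, fix a point $a_0\in\on{int}A$ and a radius $r>0$ with $\overline{B(a_0,2r)}\subset\on{int}A$. Apply Lemma \ref{engelking} to write $X=\bigcup_{i=1}^m Q_i$ where each $Q_i$ is a Peano continuum of diameter $<r$. Split the index set into those pieces that meet $B(a_0,r)$ and those that do not; say $Q_1,\dots,Q_k$ are the ones with $Q_i\cap B(a_0,r)\neq\emptyset$ and $Q_{k+1},\dots,Q_m$ the rest. Because each $Q_i$ has diameter $<r$, every piece meeting $B(a_0,r)$ is contained in $B(a_0,2r)\subset\on{int}A$.

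\textit{Step 2: choose $U$.} The union $C=Q_{k+1}\cup\dots\cup Q_m$ is closed and disjoint from $B(a_0,r)$; more precisely, $\dist(a_0,C)\geq r$ (a point of $C$ within distance $<r$ of $a_0$ would force its piece to meet $B(a_0,r)$). So put $U=B(a_0,r)$. Then $U$ is nonempty and open, $U\subset B(a_0,2r)\subset\on{int}A\subset A$, and $U$ is disjoint from $Q_{k+1},\dots,Q_m$. Now observe
$$X\setminus U=\Big(\bigcup_{i=1}^k(Q_i\setminus U)\Big)\cup\Big(\bigcup_{i=k+1}^m Q_i\Big).$$
The second bracket is already a finite union of Peano continua. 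For the first bracket, each $Q_i$ with $i\leq k$ is a Peano continuum and $U$ is open, so $Q_i\setminus U$ is a \emph{closed} subset of $Q_i$; but a closed subset of a Peano continuum need not be connected, so one more application of Lemma \ref{engelking} (or rather the same circle of ideas) is needed: a closed subset of a Peano continuum that is itself ``nice'' splits into Peano subcontinua. The cleanest route is to note that $Q_i\setminus U$ is a compact, locally connected metric space, hence has finitely many components, each of which is a Peano continuum; writing $Q_i\setminus U$ as the union of its (finitely many) components finishes it. Thus $X\setminus U$ is a finite union of Peano continua.

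\textit{Main obstacle.} The delicate point is the claim that $Q_i\setminus U$ — a closed subset of a Peano continuum obtained by deleting an open set — is again a finite union of Peano continua. This is where local connectedness is essential: a compact, locally connected metric space has only finitely many connected components and each is open (hence a Peano continuum), so the decomposition is genuinely finite. If instead $U$ were not chosen ``roundly'' the removed region could slice a piece into infinitely many components, so the real work of the lemma is organizing the small pieces so that only finitely many of them are cut, and cut cleanly. Everything else — the triangle-inequality estimates placing the good pieces inside $\on{int}A$ — is routine.
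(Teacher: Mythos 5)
There is a genuine gap at the point you yourself flag as the main obstacle: the claim that $Q_i\setminus U$ is locally connected, hence a finite union of Peano continua, is false in general. Removing an open metric ball from a Peano continuum can leave a compact set with infinitely many components. Concretely, take the ``comb with shrinking teeth''
$Q=([0,1]\times\{0\})\cup\bigcup_{n\ge 2}\{1-\frac1n\}\times[0,\frac{2}{\sqrt n}]$ in $\R^2$. This is a Peano continuum: because the teeth shorten, every small neighborhood of $(1,0)$ meets each nearby tooth in a segment containing its base point on the spine, so local connectedness holds everywhere. Yet $Q\setminus B((0,0),1)$ consists of the single spine point $(1,0)$ together with the infinitely many pairwise disjoint closed segments $\{1-\frac1n\}\times[\sqrt{2/n-1/n^2},\,2/\sqrt n]$ (the tooth tips), which accumulate at $(1,0)$. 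This set has infinitely many components and fails to be locally connected at $(1,0)$, so it is not a finite union of Peano continua. The phenomenon is scale-invariant, so a small piece $Q_i$ straddling the sphere $\partial B(a_0,r)$ can be sliced in exactly this way; compactness of $Q_i\setminus U$ is true but local connectedness is precisely what can fail, so the ``cleanest route'' you propose assumes the conclusion. (Note also that the paper's own Remark after the lemma warns that a bad choice of $U$ in $[0,1]$ already destroys the conclusion; the ball is not automatically a good choice.)

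The paper avoids the problem by never cutting any piece. After writing $X=\bigcup_{i=1}^n P_i$ with each $P_i$ a Peano continuum of diameter $<\e$ and normalizing so that no $P_i$ is contained in the union of the others, it picks the piece $P_{i_0}$ containing $x_0$ (so $P_{i_0}\subset B(x_0,\e)\subset\on{int}A$) and sets $U:=X\setminus\bigcup_{j\ne i_0}P_j$. This $U$ is open, nonempty by the irredundancy normalization, contained in $P_{i_0}\subset A$, and tautologically $X\setminus U=\bigcup_{j\ne i_0}P_j$ is a finite union of Peano continua. Your argument is repaired the same way: keep your decomposition into $Q_1,\dots,Q_k$ (meeting $B(a_0,r)$, hence inside $\on{int}A$) and $Q_{k+1},\dots,Q_m$, but define $U:=X\setminus\bigcup_{i=k+1}^m Q_i$ instead of $U=B(a_0,r)$. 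Then $U$ is open, contains $a_0$, lies in $\bigcup_{i\le k}Q_i\subset B(a_0,2r)\subset\on{int}A$, and $X\setminus U=\bigcup_{i=k+1}^m Q_i$ is a finite union of Peano continua with nothing left to prove; the problematic first bracket in your decomposition disappears.
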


\begin{proof}
	 Take an arbitrary metric $d$ on $X$. The open set $int A\neq\emptyset$ so there exists an element $x_0\in X$ and a positive radius $\e>0$ such that the open ball $B(x_0, \e)\subset int A$. Due to the Lemma \ref{engelking} there exist Peano continua $P_1, P_2,\dots,P_n$ of diameters $<\e$ such that $X=\bigcup_{i=1}^n P_i$. We can assume that $P_i \not\subset\bigcup_{j\neq i}P_j$ for all $i=1,\dots,n$. Thus we have for every $i=1,\dots,n$
	 \begin{enumerate}[(a)]
	 	\item $0<\diam P_i<\e$
	 	\item $P_i \setminus \bigcup_{j\neq i}P_j \neq\emptyset$.
	 \end{enumerate} 
	 Now, take an index $i_0\in\{1,\dots,n\}$ such that $x_0\in P_{i_0}$. By (a) we obtain that $P_{i_0}\subset B(x_0,\e)\subset int A$ so $X\setminus A\subset \bigcup_{j\neq i_0}P_j$. Define an open set $U:=P_{i_0}\setminus \bigcup_{j\neq i_0}P_j = X\setminus\bigcup_{j\neq i_0}P_j$. By (b) it is a nonempty subset of $A$ 
	 and $X\setminus U=\bigcup_{j\neq i_0}P_j$ is a finite union of Peano continua. 
	 
	 \end{proof}

\begin{remark}
	It is not true that for an arbitrary open set $U\subset X$, the thesis of Lemma \ref{lem_finite_union} holds. The counterexample may be the following: 
	$X=[0,1]$ and take the strictly decreasing sequence $(a_n)\subset X$. Then the set $U=\bigcup_{n=0}^\infty (a_{2n},a_{2n+1})$ is open but $X\setminus U$ is the infinite union of disjoint intervals. 
\end{remark}

	This is the reason why we need to consider the existence of the set $A$ where, for an arbitrary open subset $U$, we can a find fractal structure $\F$ satisfying the condition (i) from Theorem \ref{thm_top_fr}. This leads us to the notion of self regenerating fractal.
	
\begin{definition}\label{def_top_super}
	A Hausdorff topological space $A$ is called a {\em self regenerating fractal} if for every nonempty, open set $U\subset A$ there exists a family $\F$ of continuous self-maps on $A$, constant outside $U$, such that $(A,\F)$ is a topological fractal.
\end{definition}

	Existence of such space inside a Peano continuum $X$ guarantees that $X$ is an underlying space for some topological fractal. Indeed, we have the following main theorem 
	 
\begin{theorem}\label{main_thm}
	For every Peano continuum $X$ which has $A\subset X$ self regenerating fractal with nonempty interior, $X$ is an underlying space for some topological fractal.
\end{theorem}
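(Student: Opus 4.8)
The plan is to combine the machinery already assembled in the excerpt: Lemma~\ref{lem_finite_union} handles the topology of how $A$ sits inside $X$, the self regenerating property supplies a fractal structure adapted to whatever open piece we cut out, and Theorem~\ref{thm_top_fr} packages everything into a topological fractal. So the proof is essentially a one-line deduction once the right open set is produced.

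First I would apply Lemma~\ref{lem_finite_union} to the Peano continuum $X$ and the subset $A$, which has nonempty interior by hypothesis. This yields a nonempty open set $U\subset A$ such that $X\setminus U$ is a finite union of Peano continua, say $X\setminus U=\bigcup_{i=1}^n P_i$. Since each $P_i\subset X\setminus U$ and $X\setminus A\subset X\setminus U$, assumption~(\ref{t3}) of Theorem~\ref{thm_top_fr} is automatically satisfied for this $U$. (One should note that $U\subsetneq A$: indeed $A$ has nonempty interior in a Peano continuum, hence is infinite, while if $U=A$ then $X\setminus A=X\setminus U$ would still be a finite union of Peano continua, but more to the point we may shrink $U$ if needed, or simply observe that the self regenerating hypothesis combined with $A\ne X$ being possible is handled by the singleton case in Theorem~\ref{thm_top_fr}; the genuinely relevant requirement is only $U\subsetneq A$, which holds because a self regenerating fractal with more than one point cannot equal one of its own proper open subsets as the constant-outside-$U$ maps would then be globally constant.)

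Second, since $A$ is a self regenerating fractal, Definition~\ref{def_top_super} applied to this particular nonempty open $U\subset A$ provides a finite family $\F$ of continuous self-maps on $A$, all constant on $A\setminus U$, such that $(A,\F)$ is a topological fractal. This is exactly assumption~(\ref{t1}) of Theorem~\ref{thm_top_fr}, and $(A,\F)$ being a topological fractal is its standing hypothesis.

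Finally, all hypotheses of Theorem~\ref{thm_top_fr} are met: $X$ is a Peano continuum, $(A,\F)$ is a topological fractal with $A\subset X$, $U\subsetneq A$ is nonempty open, the maps of $\F$ are constant on $A\setminus U$, and $X\setminus A\subset\bigcup_{i=1}^n P_i\subset X\setminus U$ with each $P_i$ a Peano continuum. Theorem~\ref{thm_top_fr} then concludes that $X$ is an underlying space for some topological fractal, which is the assertion of Theorem~\ref{main_thm}. I expect no real obstacle here: the only point requiring a moment's care is the strictness $U\subsetneq A$ and the degenerate case $|X|\le 2$, both of which are already addressed inside the proof of Theorem~\ref{thm_top_fr} and in Definition~\ref{def_top_super} (where $U$ ranges over nonempty open proper subsets implicitly, since a self regenerating fractal with at least two points cannot be recovered from constant maps). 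All the analytic work—extending maps, verifying topological contractivity, checking the singleton condition~(ii)—has been discharged upstream in Theorem~\ref{thm_!} and Theorem~\ref{thm_top_fr}, so the present theorem is a clean corollary.
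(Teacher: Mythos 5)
Your proposal is correct and follows essentially the same route as the paper: apply Lemma~\ref{lem_finite_union} to obtain the open set $U\subset A$ with $X\setminus U$ a finite union of Peano continua, invoke the self regenerating property for this $U$ to get the family $\F$ constant on $A\setminus U$, and conclude via Theorem~\ref{thm_top_fr}. The extra digression about $U\subsetneq A$ and degenerate cases is harmless (if somewhat muddled) but not needed beyond what the paper itself records.
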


\begin{proof}
	Due to the Lemma \ref{lem_finite_union} we can find an open nonempty set $U\subset A$ such that $X\setminus U$ is a finite union of Peano continua, so it satisfies the assumption (\ref{t3}) from Theorem \ref{thm_top_fr}. Then by the definition of self regenerating fractal there exists  a family $\F$ of continuous self-maps on $A$, constant outside $U$, such that $(A,\F)$ is a topological fractal. Thus we obtain the assumption (\ref{t1}) and the Theorem \ref{thm_top_fr} implies that $X$ is an underlying topological space for some topological fractal.
\end{proof}

Therefore we went a step further in finding the answer to the Hata's problem \ref{hata}, but we can ask another two questions. 

\begin{problem}
	Has every Peano continuum which is an underlying space for some topological fractal, a self regenerating fractal as a subset with nonempty interior? 
\end{problem} 
In other words: is the converse of Theorem \ref{main_thm} true? 

\begin{problem}
	Is there a nontrivial Peano continuum without a self regenerating fractal as a subset with nonempty interior?
\end{problem} 

\section{Self regenerating fractals}

In this chapter we will expand our knowledge about self regenerating fractals which plays an important role in the main theorem of this paper. Let us notice that the definition of self regenerating fractal tries to catch the sense of self-similarity - every small piece of the space is a "copy" of the whole object. The structure of a self regenerating fractal must be rich enough for its every piece to be able to self regenerate a finite cover of the whole object. Such property appears for example in classical self-similar fractals like the Cantor set, Sierpi\'nski triangle, etc. Each of their open subsets contains the small copy of the figure - the building block for the whole set. This leads us to the notion of (self-similar) brick.

\begin{definition}
	Let $X$ be a topological space and $B\subset X$ is closed. A pair $(B,\P)$ is called a {\em brick} of $X$ if $\P$ is a finite family of continuous maps on $B$ and $\P(B)\cup B= X$.
	\newline Moreover if $B$ is the attractor for IFS $\F$, then the triple $(B,\F,\P)$ is called {\em self-similar brick}.
\end{definition}

\begin{theorem}\label{thm_brick}
	Let $X$ be a compact metric space and $(B,\F,\P)$ is a self-similar brick. Let $C:=\P(B)$. Then the following implications hold:
		\begin{equation}
		\tag{***}\label{***}
		\exists \phi\colon X\to B \text{ Lipschitz on }	 B \text{, continuous surjection, constant on }C
		\end{equation}
		$$\Downarrow$$
		\begin{equation}
		\tag{**}\label{**}
		\forall f\in\F\quad \exists \phi_f\colon X\to f(B) \text{ Lipschitz on }	 B \text{, continuous surjection, constant on }C 
		\end{equation}
		$$\Downarrow$$
		\begin{equation}
		\tag{*}\label{*}
		\begin{cases}
		(1) & \exists \P' \text{ finite family of continuous maps }X\to C \text{, constant on }C \text{ s.t. }\P'(X)=C.\\
		(2) & \exists \F' \text{ finite family of continuous maps }X\to B \text{, Lipschitz on }B \text{ and constant on }C \text{ s.t.} \\
		 & (a) \quad\F'(X)=B\\
		 & (b) \quad\forall f\in\F'\quad \Lip f|_B<1
		\end{cases}	 
		\end{equation}
		$$\Downarrow$$
		$$(X,\F'\cup\P') \text{ is a topological fractal and }\F'\cup\P' \text{contains maps constant on }C$$
		
\end{theorem}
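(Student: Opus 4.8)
The plan is to establish the three displayed implications in turn; the first and the last are essentially hypothesis‑checking, while the middle one, (\ref{**})$\Rightarrow$(\ref{*}), carries the actual construction. Throughout I may assume $X\neq\emptyset$, hence $B\neq\emptyset$ (otherwise $X=B\cup C=\emptyset$), and I will use freely that $B=\F(B)$ — so that $\F^k(B)=B$ for every $k\in\N$ and $f(B)\subset B$ for each $f\in\F$ — together with $C=\P(B)=\bigcup_{p\in\P}p(B)$, $B\cup C=X$, and the fact that $B$ is compact (a closed subset of the compact space $X$, and an IFS attractor).

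For (\ref{***})$\Rightarrow$(\ref{**}) I would simply post‑compose: given $\phi$ as in (\ref{***}), set $\phi_f:=f\circ\phi$ for $f\in\F$. Since $\phi(X)=B$, the map $\phi_f$ carries $X$ onto $f(B)$; it is continuous as a composition of continuous maps; its restriction to $B$ is $f$ composed with $\phi|_B\colon B\to B$, a composition of two Lipschitz maps, hence Lipschitz; and it is constant on $C$ because $\phi$ is.

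For (\ref{**})$\Rightarrow$(\ref{*}) — the heart of the matter — the obstacle is that the surjections $\phi_f$ come with no contraction estimate, only bare Lipschitz‑ness on $B$, so they cannot directly serve as the contractive family demanded by (\ref{*}). The idea is to absorb each $\phi_g$ into a high iterate of the genuine contractions of $\F$. Put $\alpha:=\max_{f\in\F}\Lip(f|_B)<1$ and $L:=\max_{g\in\F}\Lip(\phi_g|_B)<\infty$, fix $k\in\N$ with $\alpha^kL<1$, and set
$$\F':=\{\,h\circ\phi_g\ :\ h\in\F^k,\ g\in\F\,\},\qquad \P':=\{\,p\circ\phi_g\ :\ p\in\P,\ g\in\F\,\}.$$
Both are finite families of continuous maps, each constant on $C$ since every $\phi_g$ is. The family $\P'$ sends $X$ into $C$, because $p(\phi_g(X))=p(g(B))\subset p(B)\subset C$, and $\P'(X)=\bigcup_{p,g}p(g(B))=\bigcup_p p(B)=C$; this yields the required $\P'$. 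The family $\F'$ sends $X$ into $B$; since $\phi_g(B)\subset g(B)\subset B$ and $\Lip(h|_B)\le\alpha^k$ for $h\in\F^k$, submultiplicativity of Lipschitz constants under composition gives $\Lip((h\circ\phi_g)|_B)\le\alpha^kL<1$; and surjectivity onto $B$ is recovered from $\F^k(B)=B$ via $\F'(X)=\bigcup_{h\in\F^k,\,g\in\F}h(g(B))=\bigcup_{h\in\F^k}h(B)=\F^k(B)=B$. I expect this simultaneous demand — finiteness, surjectivity onto $B$, and Lipschitz ratio strictly below $1$ — to be the real difficulty, and it is precisely the identity $\F^k(B)=B$ that resolves it.

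For (\ref{*})$\Rightarrow$ the conclusion, I would apply Theorem~\ref{thm_!}. First, $\F'(X)\cup\P'(X)=B\cup C=X$, so $X=\bigcup_{h\in\F'\cup\P'}h(X)$. Second, $\F'$ is a finite family of continuous self‑maps of the compact metric space $X$ with $\F'(X)\subset B$, the set $B$ is compact, and the maps of $\F'$ are contractions on $B$; so Remark~\ref{rem_ifs} makes $\F'$ topologically contracting on $B$, and since $B$ is nonempty, closed, and $\F'(X)\subset B\subset X$, Lemma~\ref{lem_top_contr} promotes this to $\F'$ being topologically contracting on $X$. Third, for every $f\in\F'\cup\P'$ and $p\in\P'$ we have $\im p=p(X)\subset C$ and $f$ is constant on $C$, so $f(\im p)\subset f(C)$ is a singleton. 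Thus $\F'\cup\P'$ satisfies hypothesis (\ref{!!}) of Theorem~\ref{thm_!}, which delivers that $(X,\F'\cup\P')$ is a topological fractal; and every map of $\F'\cup\P'$ is constant on $C$ by construction.
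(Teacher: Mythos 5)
Your proposal is correct and follows essentially the same route as the paper: $\phi_f:=f\circ\phi$ for the first implication, the families $\F'=\{h\circ\phi_g: h\in\F^k, g\in\F\}$ and $\P'=\{p\circ\phi_g: p\in\P, g\in\F\}$ with $k$ chosen so that $\alpha^k L<1$ for the second, and Remark~\ref{rem_ifs}, Lemma~\ref{lem_top_contr} and Theorem~\ref{thm_!} for the last. Your verification of hypothesis (\ref{!!}) is, if anything, slightly more explicit than the paper's.
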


\begin{proof} \textbf{Step 1.} (\ref{***})$\Rightarrow$(\ref{**})\newline
	For every $f\in\F$ the map $\phi_f:=f\circ\phi$ is a continuous surjection $X\to f(B)$, Lipschitz on $B$ and constant on $C$. 
	
	\textbf{Step 2.} (\ref{**})$\Rightarrow$(\ref{*})\newline 
	Note that $B=\bigcup_{f\in\F}f(B)=\bigcup_{f\in\F}\phi_f(X)$. Take $\alpha=\max_{f\in\F}\Lip f<1$ and $\beta =\max_{f\in\F}\Lip (\phi_f|_B)$. There exists a number $k\in\N$ such that $\alpha^k\beta<1$. Define $\F'=\{g\circ\phi_f; g\in\F^k, f\in\F\}$ - a finite family of maps $X\to B$ Lipschitz on $B$, constant on $C$ and satisfying 
	\begin{enumerate}[(a)]
		\item $\F'(X)=\bigcup_{h\in\F'}h(X)= \bigcup_{g\in\F^k}\bigcup_{f\in\F}(g\circ\phi_f)(X)=\bigcup_{g\in\F^k}g(\bigcup_{f\in\F}\phi_f(X))=\bigcup_{g\in\F^k}g(B)=B$;
		\item for every $h\in\F'$ its Lipschitz constant on $B$ is $\Lip (h|_B)=\Lip (g\circ\phi_f|_B)\leq \alpha^k\beta<1$.
	\end{enumerate}
	This gives us an assumption (2). Define also $\P'=\{p\circ\phi_f; p\in\P, f\in\F\}$ - a finite family of continuous maps $X\to C$, constant on $C$ such that $$\P'(X)=\bigcup_{h\in\P'}h(X)= \bigcup_{p\in\P}\bigcup_{f\in\F}(p\circ\phi_f)(X)=\bigcup_{p\in\P}p(\bigcup_{f\in\F}\phi_f(X))=\bigcup_{p\in\P}p(B)=\P(B)=C.$$ Thus we obtain (1).

	\textbf{Step 3.} (\ref{*})$\Rightarrow (X,\F'\cup\P')$ is a topological fractal where $\F'\cup\P'$ contains maps constant on $C$ \newline
	Note that $X=B\cup C= \F'(X)\cup\P'(X)=\bigcup_{f\in\F'\cup\P'}f(X)$ and $\F'$ (by Remark \ref{rem_ifs}) is topologically contracting on $B$ (so also on $X$). Moreover each map $f$ from $\F'\cup\P'$ is constant on $C$ so for every $p\in\P$ the set $f(\im p)$ is a singleton. This means that $\F'\cup\P'$ satisfies an assumption (\ref{!!}) from Theorem \ref{thm_!} which gives the thesis.
\end{proof}

Using this theorem and noting that $X\setminus U\subset X\setminus B\subset C$ for every $U\supset B$, we obtain the following 

\begin{corollary}\label{col_superfractal}
	A compact metric space $X$ is a self regenerating fractal if for every $U$ open set in $X$ there exist $B\subset U$ and families $\F,\P$ such that $(B,\F,\P)$ is a self-similar brick of $X$ satisfies (\ref{***}),(\ref{**}) or (\ref{*}).
\end{corollary}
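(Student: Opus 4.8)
The plan is to deduce Corollary~\ref{col_superfractal} directly from Definition~\ref{def_top_super} together with Theorem~\ref{thm_brick}. So fix a compact metric space $X$ satisfying the hypothesis, and let $U\subset X$ be an arbitrary nonempty open set. By hypothesis there is a set $B\subset U$ and families $\F,\P$ such that $(B,\F,\P)$ is a self-similar brick of $X$ and one of the conditions (\ref{***}), (\ref{**}), (\ref{*}) holds. First I would note that these conditions are listed in decreasing strength: Theorem~\ref{thm_brick} shows $(\ref{***})\Rightarrow(\ref{**})\Rightarrow(\ref{*})$, so in every case we may assume that (\ref{*}) holds. Applying the last implication of Theorem~\ref{thm_brick} then yields a fractal structure $\F'\cup\P'$ on $X$ making $(X,\F'\cup\P')$ a topological fractal, and moreover every map in $\F'\cup\P'$ is constant on $C=\P(B)$.

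The only remaining point is to check that the maps of $\F'\cup\P'$ are constant \emph{outside} $U$, which is exactly what Definition~\ref{def_top_super} of a self regenerating fractal demands. Here I would use the observation made just before the statement: since $B\subset U$, the image $\P(B)=C$ satisfies $X\setminus U\subset X\setminus B\subset C$ (the second inclusion because $\P(B)\cup B=X$ by the definition of a brick, so $X\setminus B\subset\P(B)=C$). Hence a map that is constant on $C$ is in particular constant on the smaller set $X\setminus U$. Therefore every map in $\F'\cup\P'$ is constant on $X\setminus U$, and $(X,\F'\cup\P')$ is a topological fractal. Since $U$ was an arbitrary nonempty open subset of $X$, this shows $X$ is a self regenerating fractal.

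I do not expect any real obstacle here; the corollary is essentially a bookkeeping consequence of Theorem~\ref{thm_brick}. The one place that needs a little care is the chain of inclusions $X\setminus U\subset X\setminus B\subset C$: one must invoke both $B\subset U$ and the brick identity $\P(B)\cup B=X$, and notice that ``constant on $C$'' is inherited by any subset of $C$. Everything else is a direct citation of the three implications in Theorem~\ref{thm_brick} applied to whichever of (\ref{***}), (\ref{**}), (\ref{*}) the hypothesis provides.
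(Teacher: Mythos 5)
Your proof is correct and follows essentially the same route as the paper: apply the implication chain of Theorem \ref{thm_brick} to get a topological fractal structure $\F'\cup\P'$ on $X$ with all maps constant on $C$, and then use $X\setminus U\subset X\setminus B\subset C$ (from $B\subset U$ and $\P(B)\cup B=X$) to conclude the maps are constant outside $U$. The paper treats this as an immediate consequence and records exactly that inclusion in the sentence preceding the corollary.
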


\subsection{Examples of the self regenerating fractals} 

In this subsection we present several examples of self regenerating fractals and spaces that are not such fractals. Most of them satisfy the condition (\ref{***}) from Corollary \ref{col_superfractal}. Note that an open set using in all examples here is related to the subspace topology. 
\begin{example}
	The most trivial self regenerating fractal is \textbf{a singleton}. \\The only one fractal structure on it is the identity map. 
\end{example}

\begin{example}
	\textbf{The ternary Cantor set} $\C$ is a self regenerating fractal.\\  In every $U$ open subset of $\C$ we can find its small copy $B$ which creates a self-similar brick $(B,\F,\P)$ together with families of affine maps where $\P(B)=\C\setminus B$. Then it is easy to find a function $\phi$ satisfying the assumption (\ref{***}) - as in Figure \ref{cantor}. 
	\begin{figure}[h]
		\includegraphics[scale=0.5]{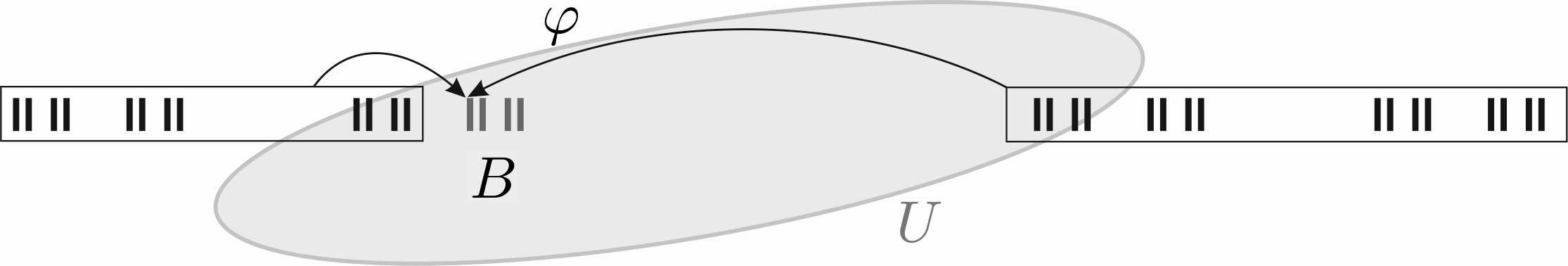}
		\caption{$\phi$ is the identity on $B$, continuous function, constant on $\C\setminus B$}\label{cantor}
	\end{figure}
	
	By Corollary \ref{col_superfractal} we obtain that $\C$ is a self regenerating fractal.
\end{example}

Below we present some IFS-attractors connected and locally connected which are self regenerating fractals. This can generate many examples of Peano continua that become topological fractals according to the Theorem \ref{main_thm}.

\begin{example}
	\textbf{The unit interval} $I=[0,1]$ is a self regenerating fractal. \\ Indeed, for every nonempty, open set $U\subset I$ there exists the interval $[a,b]\subset U$. Let families $$\F=\{f_1\colon[a,b]\to\Big[a,\frac{b+a}2\Big], f_2\colon[a,b]\to\Big[\frac{b+a}2,b\Big] \}\text{ and }\P=\{p_1\colon[a,b]\to[0,a], p_2\colon[a,b]\to[b,1]\}$$ contain affine maps from interval $[a,b]$ to another one. Then $([a,b],\F,\P)$ is self-similar brick where $\P([a,b])=I\setminus(a,b)$. Take  $\psi:I\to[a,\frac{b+a}2]$ such that 
	$$\psi(x)=
	\begin{cases}
	x & \text{for }x\in[a,\frac{b+a}2]\\
	a+b-x & \text{for }x\in(\frac{b+a}2,b] \\
	a & \text{for }x\notin[a,b].
	\end{cases}
	$$ 

	It is a continuous and non-expanding map constant outside $(a,b)$. Then the map $\phi\colon I\to [a,b]$ defined $\phi(x)= 2(\psi(x)-a)+a$ satisfies the assumption (\ref{***}). The graph of $\phi$ is the following:
	\begin{center}
		\begin{picture}(150,90)(-30,-10)
		\put(-20,0){\vector(1,0){130}}
		
		\put(0,-10){\vector(0,1){90}}
		\put(0,20){\line(1,0){20}}
		\put(20,20){\line(1,2){20}}
		\put(40,60){\line(1,-2){20}}
		\put(60,20){\line(1,0){40}}
		
		\put(20,-10){$a$}
		\put(20,-3){\line(0,1){6}}
		\put(60,-10){$b$}
		\put(60,-3){\line(0,1){6}}
		\put(100,-10){$1$}
		\put(100,-3){\line(0,1){6}}
		
		\put(-9,18){$a$}
		\put(-9,58){$b$}
		\put(-3,60){\line(1,0){6}}
		\put(57,37){$\phi$}
		\end{picture}
	\end{center}
	Thus, by Corollary \ref{col_superfractal}, we obtain that $I$ is a self regenerating fractal.
\end{example}

\begin{example}
	\textbf{The Koch curve} $\K$ is a self regenerating fractal. \\ 
	In every $U$ open set in $\K$ there exists $B$, a small copy of Koch curve. This leads us to the self-similar brick where $\F$ is a classical IFS for Koch curve scaled to $B$ and $\P$ is a family of similarities such that $\P(B)=\overline{\K\setminus B}$. Using a symmetry map and transforming the complement of $B$ into one point we obtain a continuous map non-expanding on $B$ and constant outside $B$. After scaling its image to the set $B$, we get the transformation $\phi$ from assumption (\ref{***}). See Figure \ref{koch}.
	\begin{figure}[h]
		\includegraphics[scale=0.35]{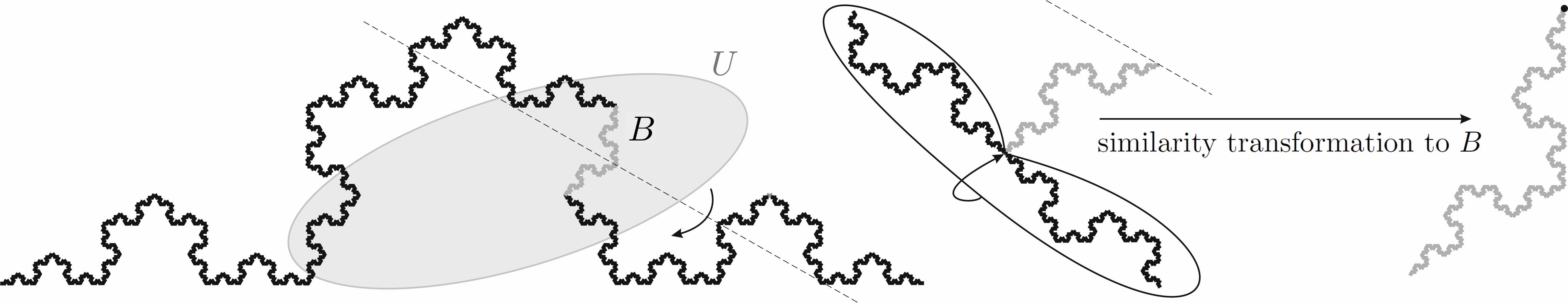}

		\caption{Construction of the continuous surjection $\phi\colon \K\to B$, Lipschitz on $B$ and~constant on $\overline{\K\setminus B}$. This  set (black) is transformed into one point.}\label{koch}
	\end{figure}
\end{example}

\begin{example}
	\textbf{The Sierpi\'nski triangle} $T$ is a self regenerating fractal. 
	\\ 
	In every $U$ open set in $T$ there exists $B$, a small copy of Sierpi\'nski triangle. This leads us to the self-similar brick $(B,\F,\P)$ where $\F$ is a standard (scaled) IFS for $T$ and $\P$ contains translations such that $\P(B)= \overline{T\setminus B}$. The continuous surjection $\phi\colon T\to B$ is a composition of two axial symmetries, retraction constant on $\overline{T\setminus B}$ and similarity transformation - like in Figure \ref{triangle}. 
	\begin{figure}[h]
		\includegraphics[scale=0.28]{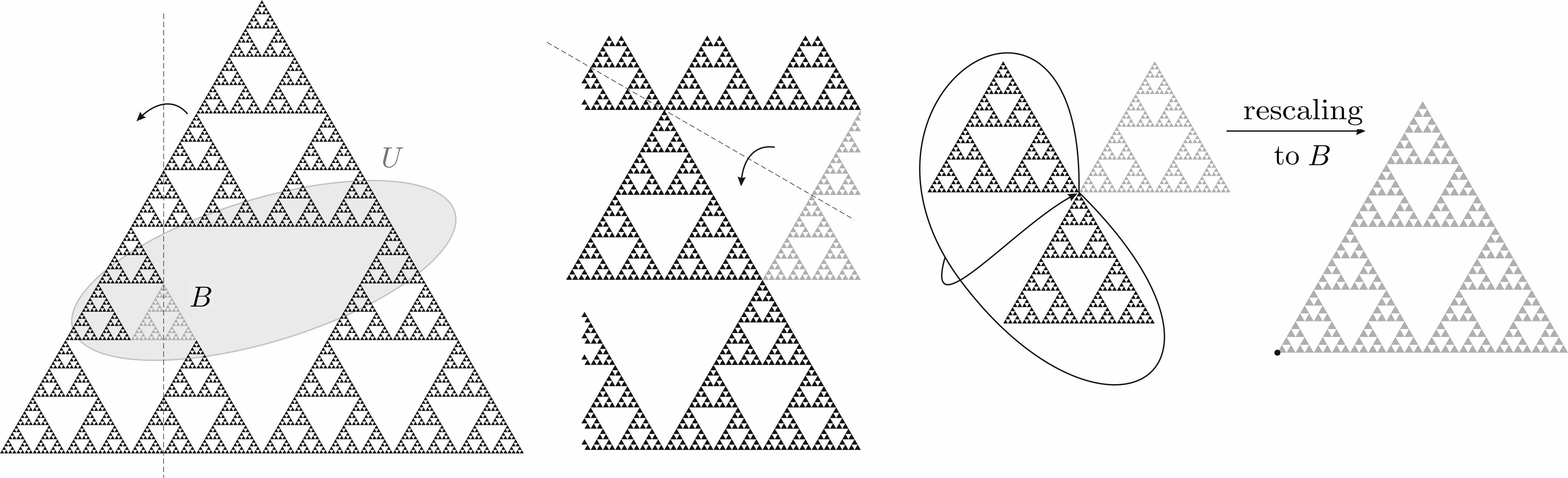}
		\caption{Construction of the continuous surjection $\phi\colon T\to B$ Lipschitz on $B$ and~constant on $\overline{T\setminus B}$ (black part of figure), satisfying the assumption (\ref{***}).}\label{triangle}
	\end{figure}
\end{example}

\begin{example}
	\textbf{The Sierpi\'nski carpet} $S$ is a self regenerating fractal.  \\ In its every open subset $U$ we can find $B$, a small affine copy of Sierpi\'nski carpet. This gives us the self-similar brick $(B,\F,\P)$ where $\P(B)= \overline{S\setminus B}$. Now we can construct the map from the assumption (\ref{***}): it is a composition of three axial symmetries, several metric projection and a similarity transformation (see Figure \ref{carpet}). All those maps are Lipschitz. By a metric projection on the plane into the convex set $Y\subset \R^2$ we understand a map $f\colon \R^2\to Y$ where  $d(x,f(x))=\inf\{d(x,y); y\in Y\}$ for every $x\in\R^2$. 
	\begin{figure}[h]
		\includegraphics[scale=0.27]{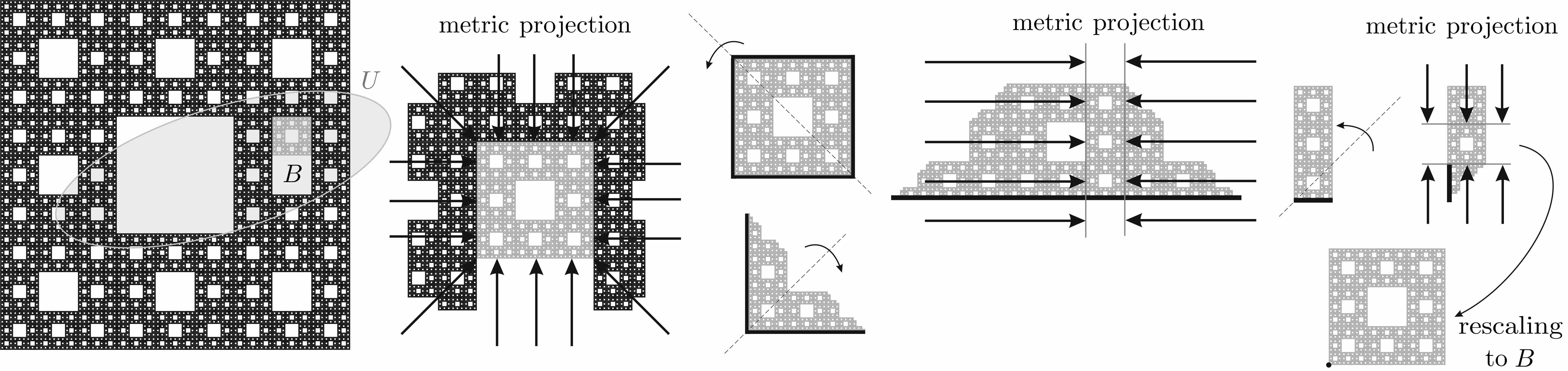}
		\caption{Construction of the continuous surjection $\phi\colon S\to B$ Lipschitz on $B$ and constant on $\overline{S\setminus B}$. This set (black part) is transformed into the one point.}\label{carpet}
	\end{figure}
\end{example}

The following interesting questions remain open:
\begin{problem}
	Are Menger cube and Barnsley fern self regenerating fractals?
\end{problem}

\begin{problem}
	Find an example of Peano continuum which is not a self regenerating fractal?
\end{problem}

We can present some non-connected examples of topological fractals that are not self regenerating fractals:
\begin{enumerate}
	\item $X\cup Y$ where $X,Y$ are disjoint sets with respectively finite and infinite cardinality, for example $[0,1]\cup\{2\}$ or  $\{\frac{1}{n}\}_{n\in\N}$,
	\item $X\cup Y$ where $X,Y$ are disjoint sets with respectively finite and infinite connected components, for example	$I\cup\C$ (interval and the Cantor set) or $\{\frac{1}{n}\}_{n\in\N}$.
\end{enumerate}

Moreover the Open Set Condition not guarantees that the space will be a self regenerating fractal. Let us recall that a topological fractal $(X,\F)$ satisfies OSC if there exists an open $V\subset X$ such that $\{f(V); f\in \F\}$ are pairwise disjoint and $\bigcup_{f\in\F}f(V)\subset V$. The set $[0,1]\cup\{2\}$ is an example which has OSC but is not a self regenerating fractal.

\section*{Acknowledgements}

The author would like to thank Taras Banakh for many fruitful discussions and proposing the statement of Definition \ref{def_top_super} and Theorem \ref{main_thm}.


\end{document}